\documentclass[a4paper,12pt,leqno]{amsart}
\usepackage[colorlinks=true,linkcolor=darkblue,citecolor=darkblue]{hyperref}

\usepackage{graphicx}
\usepackage{pinlabel} 
\usepackage{hyperref}
\usepackage{amsmath}
\usepackage{amsfonts}
\usepackage{amssymb}
\usepackage{mathtools}
\usepackage[all,cmtip]{xy}
\usepackage{amsthm}
\usepackage{tikz-cd}
\usepackage{comment}
\usepackage{enumerate}
\usepackage{url}
\usepackage[utf8]{inputenc}
\usepackage[capitalize]{cleveref}
\usepackage{geometry}
\usepackage{mathrsfs}
\usepackage{xcolor}
\usepackage[font=small,labelfont=rm]{caption}
\usepackage[font=small,labelfont=rm]{subcaption}

\geometry{
 a4paper,
 total={170mm,257mm},
 left=20mm,
 top=20mm,
}

\makeatletter
\providecommand\@dotsep{5}
\def\listtodoname{List of Todos}
\def\listoftodos{\@starttoc{tdo}\listtodoname}
\makeatother


\newtheorem{theorem}{Theorem}[section]
\newtheorem{proposition}[theorem]{Proposition}
\newtheorem{corollary}[theorem]{Corollary}
\newtheorem{lemma}[theorem]{Lemma}

\newcommand{\mycomment}[1]{}

  \theoremstyle{definition}
\newtheorem{definition}[theorem]{Definition}
\newtheorem{example}[theorem]{Example}

\newtheorem{remark}[theorem]{Remark}

\newtheorem{notation}[theorem]{Notation}



\newcommand{\R}{\mathbb{R}}


\newcommand{\calA}{{\mathcal A}}

\newcommand{\calY}{\mathcal Y}
\newcommand{\calZ}{\mathcal Z}

\newcommand{\asA}{{\mathbf A}} 
\newcommand{\asB}{{\mathbf B}} 

\newcommand{\T}{\mathscr T}



\newcommand{\nbeq}{\begin{equation}}
\newcommand{\neeq}{\end{equation}}
\newcommand{\beq}{\begin{equation*}}
\newcommand{\eeq}{\end{equation*}}


\DeclareMathOperator{\vcd}{vcd}

\DeclareMathOperator{\gd}{gd}
\DeclareMathOperator{\gdfin}{\underline{gd}}

\DeclareMathOperator{\Mod}{Mod}
\DeclareMathOperator{\PMod}{PMod}

\newcommand{\mode}{\Mod^{\pm}}
\newcommand{\modeFD}{\Mod^{\pm}(F_0;\Delta)}  
\newcommand{\modgs}[1]{\Mod({#1}_{g}^{s})}
\newcommand{\pmodgs}[1]{\PMod({#1}_{g}^{s})}

\newcommand{\modgso}{\Mod({F}_{g}^{s})}  

\newcommand{\modgsn}{\Mod({N}_{g+1}^{n})}  

\newcommand{\modgse}{\Mod^{\pm}({F}_{g}^{s})}


\DeclareMathOperator{\Diff}{Diff}

\DeclareMathOperator{\rank}{rank}

\definecolor{darkblue}{rgb}{0.0, 0.0, 0.55}
\usepackage{verbatim}


\begin{document}

\title[]{A spine for the decorated Teichmüller space of a punctured non-orientable surface}

\author[N. Colin]{Nestor Colin}
\author[R. Jiménez Rolland]{Rita Jiménez Rolland}
\author[P. L. León Álvarez ]{Porfirio L. León Álvarez}
\address{Instituto de Matemáticas, Universidad Nacional Autónoma de México. Oaxaca de Juárez, Oaxaca, México 68000}
\email{rita@im.unam.mx}
\email{ncolin@im.unam.mx}
\email{porfirio.leon@im.unam.mx}

\author[L. J. Sánchez Saldaña]{Luis Jorge S\'anchez Salda\~na}
\address{Departamento de Matemáticas, Facultad de Ciencias, Universidad Nacional Autónoma de México}
\email{luisjorge@ciencias.unam.mx}


\date{}

\keywords{Mapping class groups, Teichmüller space, spines, classifying spaces for proper actions, non-orientable surfaces}

\subjclass{57K20, 55R35, 20J05, 57M07, 57M50, 57M60}

\begin{abstract}
Building on work of Harer \cite{Ha86},  we construct a {\it spine} for the decorated Teichm\"uller space of a non-orientable surface with at least one puncture and negative Euler characteristic. We compute its dimension, and show that the deformation retraction onto this spine is equivariant with respect to the pure mapping class group of the non-orientable surface.  As a consequence, we obtain a model for the classifying space for proper actions of the pure mapping class group of a punctured non-orientable surface, which is of minimal dimension in the case there is a single puncture.
\end{abstract}

\maketitle


\section{Introduction}

Let \( \Sigma_g \) be a possibly non-orientable closed connected surface of genus \( g \) and consider a collection \( \{p_1, \ldots, p_s\} \) of distinguished distinct points in \( \Sigma_g \). The {\it mapping class group} $\modgs{\Sigma}$ is the group of isotopy classes of all  (just orientation-preserving if $\Sigma_g$ is orientable) diffeomorphisms $f\colon \Sigma_g^s\to \Sigma_g^s$ where $\Sigma_g^s:=\Sigma_g - \{ p_1, \ldots, p_s \}$. The group $\modgs{\Sigma}$ permutes the  set $\{p_1, \ldots, p_s\}$ and the kernel of such action  is the {\it pure mapping class group} $\pmodgs{\Sigma}$. These groups are related by the following short exact sequence
$$1\rightarrow \pmodgs{\Sigma}\rightarrow \modgs{\Sigma}\rightarrow \mathfrak{S}_s\rightarrow 1,$$
where $\mathfrak{S}_s$ denotes the symmetric group on $s$ letters. The group $\modgs{\Sigma}$ is virtually torsion free, and its virtual cohomological dimension ($\vcd$) was computed by Harer \cite[Theorem 4.1]{Ha86} for orientable surfaces, and by Ivanov \cite[Theorem~6.9]{I87} for non-orientable surfaces. We use the notation $F_g$  if the surface $\Sigma_g$ is orientable and $N_g$ if it is non-orientable. The group $ \modgs{F} $ is an index $2$ subgroup of the {\it extended mapping class group} $\Mod^{\pm}( F_{g}^{s} ) $ of isotopy classes of all diffeomorphisms of $F_g^s$.

Take $\Delta =\{p_1, \ldots, p_m\}$ a subset of {\it marked points} in $F_g$, where $1\leq m\leq s$.  In \cite{Ha86} Harer explicitly described  a cell complex $\mathcal{Y}=\mathcal{Y}_g^{s,m}$  inside the {\it decorated Teichmüller space} $\T(F_g^s;\Delta)$ 
 onto which  $\T(F_g^s;\Delta)$ may be $\PMod(F_{g}^s)$-equivariantly retracted. The complex $\mathcal{Y}$ is often called {\it Harer's spine}, and when $m=1$ it gives a spine for the  Teichmüller space $\T(F_g^s)$ of equivalence classes of marked Riemann surfaces. { It is used in Harer's computation \cite[Theorem 4.1]{Ha86} of  $\vcd(\PMod_{g}^s)$,} and it plays a fundamental role in Harer--Zagier's computation \cite{HarerZagier} of the Euler characteristic of the moduli space of curves. Moreover, it gives a model  for $\underline E \PMod(F_g^s)$, the {\it classifying space for proper actions of $\PMod(F_g^s)$}, which is of minimal dimension when $s=1$; see for instance \cite[Introduction]{CJRLASS}. 

 In this paper, we consider a non-orientable surface $N_{g+1}^n$ of genus $g+1$ with $n\geq 1$ punctures. We take $1\leq \ell\leq n$,  $s=2n$, $m=2\ell$,   and  the {\it orientable double cover} $\pi:F_g\rightarrow N_{g+1}$,  where the group of covering transformations is generated by an orientation reversing diffeomorphism $\sigma:F_g\rightarrow F_g$ such that $\sigma(p_{2i} )= p_{2i-1}$ for all $1\leq i\leq n$. The set $\pi(\Delta)$ consists of  $\ell$ {\it marked points} in $N_{g+1}$. Building on the work of Harer, and by identifying the decorated Teichmüller space $\T(N_{g+1}^n;\pi(\Delta))$ of the non-orientable surface  with a subspace of the decorated Teichmüller space $\T(F_g^s;\Delta)$ of its orientable double cover, we construct a {\it spine} for $\T(N_{g+1}^n;\pi(\Delta))$: a cell complex $\mathcal{Z}=\mathcal{Z}_{g+1}^{n,\ell}$  inside the  decorated Teichmüller space $\T(N_{g+1}^n;\pi(\Delta))$ 
 onto which  $\T(N_{g+1}^n;\pi(\Delta))$ may be $\PMod(N_{g+1}^n)$-equivariantly retracted. Our main result is the following.

 \begin{theorem}\label{main:th:1}
Let $1 \leq \ell \leq n$  and $g +n> 1$. There exists  a $\PMod(N_{g+1}^n)$-equivariant spine $\calZ=\mathcal{Z}_{g+1}^{n,\ell}$ for the decorated Teichmüller space $\T(N_{g+1}^n;\pi(\Delta))$ of dimension 
\[
    \dim(\calZ) = \begin{cases}
    \vcd(\PMod(N_{g+1}^n))+\ell, & \text{if } \ell< n, \\
    \vcd(\PMod(N_{g+1}^n))+(\ell-1), & \text{if } \ell = n.
    \end{cases}
\] 
Moreover, the spine $\mathcal{Z}$ gives a model for the classifying space for proper actions of $\PMod(N_{g+1}^n)$. 
\end{theorem}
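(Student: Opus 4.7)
The plan is to transport Harer's spine $\calY = \calY_g^{2n,2\ell}$ from the orientable setting via the double cover $\pi\colon F_g \to N_{g+1}$ and its orientation-reversing deck involution $\sigma$. The first step is to identify $\T(N_{g+1}^n;\pi(\Delta))$ with the $\sigma$-fixed subspace of $\T(F_g^{2n};\Delta)$: since $\sigma$ swaps $p_{2i-1}$ with $p_{2i}$, the decoration is preserved, and pullback along $\pi$ induces a homeomorphism between decorated hyperbolic structures on $N_{g+1}^n$ and $\sigma$-invariant ones on $F_g^{2n}$. The pure mapping class group $\pmodgsn$ then embeds in $\PMod(F_g^{2n})$ as (a subgroup of) the centralizer of $\sigma$, via orientation-preserving $\sigma$-equivariant lifts.

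Since Harer's spine $\calY$ is defined via combinatorial data (isotopy classes of arc systems) on which $\sigma$ acts by permutation, it is $\sigma$-invariant, and I would set $\calZ := \calY^\sigma = \calY \cap \T(N_{g+1}^n;\pi(\Delta))$. The main technical obstacle is to verify that Harer's deformation retraction of $\T(F_g^{2n};\Delta)$ onto $\calY$ can be chosen $\sigma$-equivariantly, so that it restricts on the fixed subspace to a $\pmodgsn$-equivariant deformation retraction onto $\calZ$. I would approach this either by checking $\sigma$-equivariance directly from the intrinsic definition of Harer's retraction (built from hyperbolic lengths of horocyclic arcs, which transform naturally under $\sigma$), or by a symmetrization argument averaging with the $\sigma$-conjugate retraction.

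For the dimension formula, I would parameterize the top-dimensional cells of $\calZ$ by $\sigma$-invariant maximal arc systems on $F_g^{2n}$, noting that each $\sigma$-orbit of arcs contributes a single parameter to $\dim \calZ$ rather than two. A combinatorial count combined with Ivanov's formula for $\vcd(\pmodgsn)$ should then yield the claimed dimensions, with the split between the cases $\ell < n$ and $\ell = n$ coming from a rank constraint that becomes degenerate precisely when every puncture is marked.

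Finally, $\T(N_{g+1}^n;\pi(\Delta))$ is contractible, the $\pmodgsn$-action on it is proper, and fixed-point sets of finite subgroups are contractible (a non-orientable version of Nielsen realization). Since $\calZ$ is a $\pmodgsn$-equivariant deformation retract and a finite-dimensional CW complex, these properties descend to $\calZ$, showing that $\calZ$ is a model for $\underline{E}\pmodgsn$.
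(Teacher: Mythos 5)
Your overall strategy coincides with the paper's: identify $\T(N_{g+1}^n;\pi(\Delta))$ with the $\sigma$-fixed locus of $\T(F_g^{2n};\Delta)$, set $\calZ=\calY^\sigma$, check that Harer's retraction is $\sigma$-equivariant so that it restricts to the fixed locus, and then count chains of $\sigma$-invariant filling arc systems. For the equivariance step, the paper takes your first option (a direct check): Harer's retraction is built by successively collapsing open stars of vertices of $\calA^0_\infty$ of a given weight, and since $\sigma$ permutes these vertices the collapse is automatically equivariant under all of $\modeFD$, hence under $\sigma$. I would drop the ``averaging'' alternative --- there is no obvious way to average two deformation retractions and stay inside the complex --- but the direct check works and is what the paper does.

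The genuine gap is in the dimension count, which as stated hides the two facts that actually produce the formula. First, you need that \emph{every} $\sigma$-invariant arc system has an even number of arcs, i.e.\ that no $\Delta$-arc satisfies $\sigma(\alpha)\simeq\alpha$ rel $\Delta$; this holds because $\sigma$ interchanges $p_{2i-1}$ and $p_{2i}$, so $\alpha$ and $\sigma(\alpha)$ have different endpoint data (for loops, different base points). Without excluding $\sigma$-fixed arcs, your ``one parameter per $\sigma$-orbit'' heuristic fails. This parity fact is what gives both the halving of chain lengths, $k\le\tfrac12\bigl(\rank(\asA_k)-\rank(\asA_0)\bigr)$, and the case split at $\ell=n$: the orientable lower bound $\rank\ge 2g+s-2$ for a filling system when $m=s$ is even, hence must be bumped to $2g+s-1$ to respect parity, which is exactly where the $\ell-1$ in the statement comes from (not from a ``degenerate rank constraint'' in any other sense). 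Second, the upper bound is only attained if one exhibits an explicit chain $\asB_{min}\subsetneq\cdots\subsetneq\asB_{max}$ of $\sigma$-invariant filling systems realizing the extreme ranks $2g+s-3$ (resp.\ $2g+s-1$) and $6g+2s+m-7$, adding a $\sigma$-orbit of two arcs at each step; the paper constructs these by hand on a $\sigma$-symmetric model of $F_0$. Finally, for the $\underline{E}$ statement the paper does not invoke a non-orientable Nielsen realization directly: it starts from $\T(F_g^{2n};\Delta)$ being a model for $\underline{E}\modeFD$ and applies L\"uck's lemma that the $\sigma$-fixed set is a model for the centralizer of $\sigma$, into which $\PMod(N_{g+1}^n)$ embeds. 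Your route would require an independent source for the contractibility of fixed sets of finite subgroups acting on the non-orientable Teichm\"uller space; the centralizer argument gets this for free from the orientable case.
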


When $\ell=1$, the decorated Teichmüller space $\T(N_{g+1}^{n};\pi(\Delta))$ coincides with the Teichmüller space $\T(N_{g+1}^{n})$ of equivalence classes of marked Klein surfaces. In this case, our \cref{main:th:1} specializes to the following result.

\begin{corollary}[A spine for Teichm\"uller space of a non-orientable surface with punctures]\label{spine:one:puncture}
Let $1 \leq \ell \leq n$  and $g +n> 1$. There exists  a $\PMod(N_{g+1}^n)$-equivariant spine $\calZ=\mathcal{Z}_{g+1}^{n,1}$ for  Teichmüller space $\T(N_{g+1}^n)$ of dimension 
\[
    \dim(\calZ) = \begin{cases}
    \vcd(\PMod(N_{g+1}^n))+1, & \text{if }  n>1, \\
    \vcd(\PMod(N_{g+1}^n)), & \text{if } n=1.
    \end{cases}
\] 
Hence, $\calZ$ is a model for the classifying space for proper actions of $\PMod(N_{g+1}^n)$, which is of minimal dimension when $n=1$.
\end{corollary}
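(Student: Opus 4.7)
My plan is to deduce the corollary as a straightforward specialization of \cref{main:th:1} to $\ell = 1$. The paragraph preceding the corollary asserts the identification $\T(N_{g+1}^n;\pi(\Delta)) = \T(N_{g+1}^n)$ in that case: when only a single point is marked, the decoration carries no additional information and the two Teichmüller spaces agree as $\PMod(N_{g+1}^n)$-spaces. Under this identification, the $\PMod(N_{g+1}^n)$-equivariant spine $\calZ = \calZ_{g+1}^{n,1}$ produced by \cref{main:th:1} sits naturally inside $\T(N_{g+1}^n)$, and the equivariant deformation retraction of $\T(N_{g+1}^n;\pi(\Delta))$ onto $\calZ$ transfers to an equivariant deformation retraction of $\T(N_{g+1}^n)$ onto $\calZ$.

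Next, I would substitute $\ell = 1$ into the two-case dimension formula of \cref{main:th:1}. The hypothesis $\ell < n$ becomes $n > 1$ and yields $\dim(\calZ) = \vcd(\PMod(N_{g+1}^n)) + 1$, while $\ell = n$ becomes $n = 1$ and yields $\dim(\calZ) = \vcd(\PMod(N_{g+1}^n))$. This matches the dichotomy in the statement.

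Since \cref{main:th:1} already shows that $\calZ$ is a model for the classifying space for proper actions of $\PMod(N_{g+1}^n)$, it remains only to justify the minimality assertion when $n = 1$. For this I would invoke the standard fact that, for a virtually torsion-free group $\Gamma$, the dimension of any model for the classifying space for proper actions is bounded below by $\vcd(\Gamma)$. Applied to $\Gamma = \PMod(N_{g+1}^1)$, this lower bound coincides with the dimension of $\calZ$, so our model is of minimal dimension. I do not anticipate any serious obstacle: the corollary is essentially a reading of \cref{main:th:1} at $\ell = 1$ combined with this well-known lower bound, and the only subtle ingredient is the identification of the decorated and ordinary Teichmüller spaces in the one-marked-point case, which is already taken for granted in the setup.
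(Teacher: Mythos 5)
Your proposal is correct and follows essentially the same route as the paper: Corollary~\ref{spine:one:puncture} is obtained precisely by setting $\ell=1$ in Theorem~\ref{main:th:1}, using that $\T(N_{g+1}^n;\pi(\Delta))=\T(N_{g+1}^n)$ when there is a single marked point, and deducing minimality for $n=1$ from the standard inequality $\vcd(G)\leq \gdfin(G)$ for virtually torsion-free groups, exactly as stated in Section~\ref{teic:model:proper:action}.
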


Recall that the \emph{proper geometric dimension} $\gdfin(G)$ of a group \( G \) is defined as the minimum integer \( n \) such that there is a model for \( \underline{E}G \) of dimension \( n \).  
Our \cref{spine:one:puncture}, combined with arguments in the same lines as \cite[Corollary 3.5]{CJRLASS}, computes the proper geometric dimension of $\PMod(N_{g+1}^n)$, answering a question posed by Hidber, Sánchez Saldaña and  Trujillo-Negrete \cite[Question 7.2]{MR4432529}.

\begin{corollary}\label{proper:dim:non:orien:mcgs:1}
For all $n\geq 1$ and $g+n>1$, the proper geometric dimension of $\PMod(N_{g+1}^n)$ is equal to its virtual cohomological dimension $\vcd(\PMod(N_{g+1}^n))$.
\end{corollary}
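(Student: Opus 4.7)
The plan is to combine \cref{spine:one:puncture} with the standard Bredon cohomological machinery, exactly as in the orientable argument of \cite[Corollary 3.5]{CJRLASS}. For any virtually torsion-free group $G$ one has the Serre inequality $\vcd(G)\leq \gdfin(G)$, so only the reverse bound needs to be produced for $G=\PMod(N_{g+1}^n)$.

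When $n=1$, the spine $\calZ_{g+1}^{1,1}$ supplied by \cref{spine:one:puncture} is already a model for $\underline{E}\PMod(N_{g+1}^1)$ of dimension exactly $\vcd(\PMod(N_{g+1}^1))$, hence $\gdfin\leq \vcd$ follows at once. For $n\geq 2$ the same corollary gives a model of dimension $\vcd+1$, so it only yields $\gdfin\leq \vcd+1$ and $\cdfin\leq \vcd+1$. To close the one-dimensional gap I would work at the level of Bredon cohomology and prove the sharper identity
\[
\cdfin(\PMod(N_{g+1}^n))=\vcd(\PMod(N_{g+1}^n)).
\]
Once this equality is secured, the Lück--Meintrup realization theorem furnishes a model for $\underline{E}\PMod(N_{g+1}^n)$ of dimension $\max\{3,\cdfin\}$, which equals $\vcd$ whenever $\vcd\geq 3$; the finitely many small cases with $\vcd<3$ can be inspected individually, since then $\PMod(N_{g+1}^n)$ is virtually free or even trivial and $\gdfin\leq 1$ is automatic.

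The hard step is therefore the equality $\cdfin=\vcd$. I would follow the orientable template of CJRLASS: for each finite subgroup $F\leq \PMod(N_{g+1}^n)$, use Nielsen realization on $N_{g+1}^n$ to represent $F$ by isometries of a Klein structure, identify (up to finite index) the normalizer $N_{\PMod(N_{g+1}^n)}(F)$ with the pure mapping class group of the quotient orbifold, and verify that its virtual cohomological dimension drops appropriately. Combining this with an analysis of the fixed-point sets $\calZ^F$ inside the spine of \cref{main:th:1} and the standard spectral sequence computing Bredon cohomology from cellular chains yields vanishing of $\mathrm{H}^{*}_{\underline{E}}(\PMod(N_{g+1}^n);-)$ in degrees above $\vcd$. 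The main obstacle is the bookkeeping for centralizers of finite subgroups in the non-orientable setting; the orientable double cover $\pi\colon F_g\to N_{g+1}$ used throughout the paper provides the bridge that makes the transfer from the orientable CJRLASS argument feasible.
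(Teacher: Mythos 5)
Your $n=1$ case matches the paper: \cref{spine:one:puncture} (equivalently \cref{spine:model:proper:action} with $\ell=n=1$) already produces a model for $\underline{E}\PMod(N_{g+1}^1)$ of dimension $\vcd(\PMod(N_{g+1}^1))=2g-1$, and the Serre inequality $\vcd\leq\gdfin$ gives the converse. For $n\geq 2$, however, you have replaced the argument the paper actually invokes with a different and much heavier one, and the replacement is not carried out. The paper's route (following \cite[Corollary~3.5]{CJRLASS}) is the Birman exact sequence: forgetting $n-1$ of the punctures gives $1\to \pi_1(\mathrm{Conf}_{n-1}(N_{g+1}^1))\to \PMod(N_{g+1}^n)\to \PMod(N_{g+1}^1)\to 1$, whose kernel is torsion-free of cohomological dimension $n-1$; for extensions with torsion-free kernel $\gdfin$ is subadditive, so the $(2g-1)$-dimensional spine for the base yields a model for $\underline{E}\PMod(N_{g+1}^n)$ of dimension $(2g-1)+(n-1)=2g+n-2=\vcd(\PMod(N_{g+1}^n))$. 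No Bredon-cohomological computation is required.

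The route you propose instead --- establishing $\cdfin=\vcd$ by Nielsen realization, identifying normalizers of finite subgroups with mapping class groups of quotient orbifolds, analyzing the fixed sets $\calZ^F$, and then invoking L\"uck--Meintrup --- is the strategy of Aramayona--Mart\'{\i}nez-P\'erez for orientable surfaces, not the content of \cite[Corollary~3.5]{CJRLASS}, and none of its ingredients (Nielsen realization for Klein surfaces, the orbifold description of Weyl groups, the $\vcd$ estimates for those groups, the vanishing of Bredon cohomology above $\vcd$) is established in this paper or follows from its results; as written it is a program, not a proof. Moreover, even granting $\cdfin=\vcd$, your disposal of the low-dimensional cases is incorrect: L\"uck--Meintrup only yields a model of dimension $\max\{3,\cdfin\}$, and when $\vcd(\PMod(N_{g+1}^n))=2$ (for instance $N_2^2$ or $N_1^4$) the group is \emph{not} virtually free, so obtaining a $2$-dimensional model from $\cdfin=2$ is precisely the proper Eilenberg--Ganea problem, which this abstract route cannot settle. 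The Birman-sequence construction sidesteps all of this by exhibiting the minimal-dimensional model explicitly.
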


Our main result \cref{main:th:1} follows from \cref{restriction:Retraction}, which proves the existence of an equivariant spine; \cref{thm:main} where its  dimension is computed, and \cref{spine:model:proper:action} which proves that the spine gives a model for the classifying space for proper actions. 

\subsection*{Outline of the paper and the proof of Theorem \ref{main:th:1}}

In \cref{Sec:Preliminaries} we recall relevant definitions and introduce the notation that will be used through the paper. We first describe how the decorated Teichmüller of a non-orientable surface is related to the decorated Teichmüller of its orientable double cover. By \cite[Theorem 4.3]{Nestor:Xico}, the orientable double cover $\pi$ identifies the  Teichmüller space $\T(N_{g+1}^n)$ of a non-orientable surface  with the subspace of fixed points of the involution $\sigma$ acting on the Teichmüller space $\T(F_g^s)$ of its orientable double cover. We use this to show, in \cref{prop:NonOrientDecorated}, that the double cover $\pi$ also induces an equivariant homeomorphism between $\T(N_{g+1}^n;\pi(\Delta))$ and the subspace $\T(F_{g}^s;\Delta)^{\sigma}$ of $\sigma$-fixed points. 

To construct the spine $\calZ$, we first recall in \cref{Sec:Ideal:Triangulation} an ideal triangulation $\calA^0-\calA_\infty^0$ of the decorated Teichmüller space $\T(F_{g}^s;\Delta)$ introduced in \cite[Section 1]{Ha86}. It is defined as a subspace of $\calA^0$, the barycentric subdivision of a simplicial complex of arcs in the surface $F_g$. By considering the set of fixed points under the action of $\sigma$ on $\calA^0-\calA_\infty^0$, we obtain in \cref{restriction:harer:map} an ideal triangulation for $\T(N_{g+1}^n;\pi(\Delta))$. In \cref{Sec:SpineY} we review the definition of Harer's spine $\calY$ from \cite[Section 2]{Ha86}.  We observe that Harer's deformation retraction from $\T(F_{g}^s;\Delta)$ onto $\calY$ is actually equivariant  with respect to the subgroup of the extended mapping class group generated by $\sigma$ and the mapping classes that preserve the set $\Delta$. From this, we deduce in \cref{restriction:Retraction} that the set of $\sigma$-fixed points $\calY^{\sigma}$ gives a $\PMod(N_{g+1}^n)$-equivariant spine for $\T(N_{g+1}^n;\pi(\Delta))$.

We compute the dimension of this spine in \cref{Sec:Spine} to obtain \cref{thm:main}. This is done by following the approach in the proof of \cite[Theorem 1]{CJRLASS}, and by studying properties of $\sigma$-invariant arcs systems.  Finally, in \cref{teic:model:proper:action} we show how the decorated Teichmüller spaces, and the spine that we have constructed, give models for classifying spaces for proper actions of pure mapping class groups.

\subsection*{Acknowledgments.}   The first author was funded by SECIHTI through the program \textit{Estancias Posdoctorales por México.} The third author's work was supported by UNAM \textit{Posdoctoral Program (POSDOC)}. All authors are grateful for the financial support of DGAPA-UNAM grant PAPIIT IA106923.


\section{Decorated Teichmüller spaces and the orientable double cover}\label{Sec:Preliminaries}
In this section we introduce the notation that is used for the rest of the paper, and we describe how the decorated Teichmüller of a non-orientable surface is related to the decorated Teichmüller of its orientable double cover.  

Let \( \Sigma_g \) be a (possibly non-orientable) closed connected surface of genus \( g \), and  consider a set $\{p_1,\ldots, p_s \}$ of distinguished distinct points on $\Sigma_g$. We use the notation $N_g$  if the surface $\Sigma_g$ is non-orientable, and $F_g$ if it is orientable.  {In the latter case we take $F_g$ oriented. }

Let $\Diff(\Sigma_g^s)$ denote the group of diffeomorphisms of $\Sigma_g$ that preserve the distinguished points setwise, and let $\Diff_0(\Sigma_g^s)$ be the subgroup $\Diff(\Sigma_g)$ of all diffeomorphisms isotopic to the identity rel $\{p_i\}$. Note that the {\it extended mapping class group} $\Mod^{\pm}(\Sigma_g^s)$ is isomorphic to the quotient $\Diff(\Sigma_g^s)/ \Diff_0(\Sigma_g^s)$. When the surface $\Sigma_g$ is non-orientable we omit ${\pm}$ from the notation, and if $s=0$ we remove it as well.


\subsection{Decorated Teichmüller spaces for orientable and non-orientable surfaces}\label{Teichmüller:orientable:nonorientable}

The Teichmüller space of an orientable surface can be defined in several equivalent ways. We recall here the definition from \cite[Chapter 1]{Harer88Moduli} used in \cite{Ha86}. See also \cite[Sections 2 and 4]{Nestor:Xico}.

A \textit{marked Riemann surface} is a triple $(R,(q_1,\ldots,q_s),[f])$ where $R$ is a Riemann surface, $q_1,\ldots,q_s$ are distinct ordered points on $R$, and the \emph{marking} $[f]$ is the isotopy class of an orientation preserving homeomorphism $f: R \to F_g$ with $f(q_i)=p_i$ for all $i$. Here $[f]$  denotes the isotopy class of $f$ rel $\{q_i\}$.  We denote the set of marked Riemann surfaces by $\mathcal{M}(F_g^s)$.

There is an action of $\Diff(F_g^s)$ on $\mathcal{M}(F_g^s)$ given as follows:   
\begin{equation}\label{MCG_Action}
h\cdot(R,\{q_1,\ldots,q_s \},[f])=\begin{cases}
    (R,(q_{\eta(1)},\ldots,q_{\eta(s)}),[h\circ f])\text{\ \ if\ \ $h$\ \ {\it preserves} the orientation,}\\
     (R^*,(q_{\eta(1)},\ldots,q_{\eta(s)}),[h\circ f])\text{\ \ if\ \ $h$\ \ {\it reverses} the orientation,}\\
\end{cases}
\end{equation}
where $h\in\Diff(F_g^s)$, and  $\eta\in \mathfrak{S}_s$ is the inverse of the permutation given by the action of $h$ on the set of distinguished points $\{ p_1, \ldots, p_s\}$ on $F_g$. We denote by  $R^*$ the {\it mirror image} of the Riemann surface $R$, see for instance \cite[Section 1.3]{Strebel68}.  See also  \cite[Section II.3E]{AS60}, where $R^*$ is called the {\it conjugate} surface of $R$.

Two marked Riemann surfaces $(R_1,(q_1^1,\ldots,q_s^1),[f_1])$ and $(R_2,(q_1^2,\ldots,q_s^2),[f_2])$ are equivalent if there is a diffeomorphism $h\in\Diff_0(F_g^s)$, such that $(f_2^{-1}\circ h\circ f_1):R_1\to R_2$ is analytic.

The space $\mathcal{M}(F_g^s)/\Diff_0(F_g^s)$ of equivalence classes of marked Riemann surfaces, denoted by $\T(F_g^s)$,  is the \textit{Teichmüller space of an orientable surface of genus $g$ with $s$ distingushed points}. It is known that $\T(F_g^s)$ is  diffeomorphic to $\mathbb{R}^{6g-6+2s}$; see for instance \cite[Proposition 7.1.1 $\&$ Theorem 7.6.3]{Hubb06}. Furthermore, the action of $\Diff(F_g^s)$ on $\mathcal{M}(F_g^s)$, induces an action of the extended mapping class group $\modgse$  on $\T(F_g^s)$.

On the other hand, every non-orientable surface $N_g$ can be endowed with a \textit{Klein surface structure} (i.e. a {\it dianalytic} structure), and the definition of Teichmüller space naturally extends to the non-orientable setting; see also \cite[Section 2.1]{HarerGJ01} and \cite[Section 4]{Nestor:Xico}.  A \emph{marked Klein surface} is a triple $(R,( q_1,\ldots,q_s),[f])$, where $R$ is now a Klein surface and the marking $[f]$ is the isotopy of a homeomorphism $f: R \to N_g$ with $f(q_i)=p_i$ for all $i$. We define the equivalence of two marked Klein surfaces in a similar way than for marked Riemann surfaces, but we now require that  $f_2^{-1}\circ h\circ f_1$ is a \emph{dianalitic} homeomorphism. 

The \textit{Teichmüller space $\T(N_g^s)$ of a non-orientable surface of genus $g$ with $s$ distingushed points} is the space $\mathcal{M}(N_g^s)/\Diff_0(N_g^s)$ of equivalence classes of marked Klein surfaces. Analogously as in the orientable setting, the mapping class group $\Mod(N_g^s)$ acts on $\T(N_g^s)$, and  the space $\T(N_g^s)$ is diffeomorphic to $\mathbb{R}^{3g - 6 + 2s}$; see \cite[Theorem 5.11.4]{Sep92} and \cite[Theorem 2.2]{PapPenner16}. 

\begin{remark}
    To simplify the exposition we follow the notation used in \cite{Ha86} and refer to the equivalence class of a marked Riemann  or Klein surface $(R, (q_1, \ldots, q_s), [f])$ simply as $R$.
\end{remark}

We now let \( 1\leq m \leq s \), and  from the collection \( \{p_1, \ldots, p_s\} \) of distinguished points in \( \Sigma_g \), take the subset \( \Delta = \{p_1, \ldots, p_m\} \) of \emph{marked points}.  As before, we let  $\Sigma_g^s:=\Sigma_g-\{p_1,\ldots,p_s\}$, and  take  $\Sigma_0:=\Sigma_g-P$, where $P=\{p_{m+1},\ldots,p_s\}$.

\begin{definition}
The \emph{decorated Teichm\"uller space} $\T(\Sigma_g^s;\Delta)$ is the space of all pairs $(R,\lambda)$, where $R\in\T(\Sigma_g^s)$ and {$\lambda=[\lambda_1:\lambda_2:\ldots:\lambda_m]$} is a projective class of a collection of positive weights on the $m$ points of $\Delta$. 
\end{definition}

Topologically $\T(\Sigma_g^s;\Delta)$ is homeomorphic to the product of $\T(\Sigma_g^s)$ and an open simplex {${\rm{K}}^{m-1}$} of dimension $m-1$. Therefore, $\dim \T(\Sigma_g^s;\Delta)=\dim\T(\Sigma_g^s)+m-1$. In particular,  $\T(\Sigma_g^s;\Delta)$ is just the Teichmüller space $\T(\Sigma_g^s)$ when $m=1$. 

\begin{notation}
We denote by $\mode(\Sigma_0,\Delta)$ the group of isotopy classes of all diffeomorphisms $f\colon \Sigma_0 \to \Sigma_0$ that preserve the set $\Delta$ of marked points in $\Sigma_0$. Notice that $\mode(\Sigma_0,\Delta)$ is isomorphic to a subgroup of $\Mod^{\pm}(\Sigma_{g}^s)$ that contains $\PMod^{\pm}(\Sigma_{g}^s)$. When $\Sigma_g$ is non-orientable, we remove $\pm$ from the notation.
\end{notation} 

There is a natural diagonal action of $\mode(\Sigma_0,\Delta)$ on $\T(\Sigma_g^s;\Delta)$.  It is given by formula (\ref{MCG_Action}) on the factor $\T(\Sigma_g^s)$. In the open simplex  ${\rm{K}}^{m-1}$, this action corresponds to a permutation on the set of marked points of $\Delta$ and their associated positive weights, induced by the mapping class. It is worth pointing out that $\T(\Sigma_g^s;\Delta)$ admits an action of $\Mod^{\pm}(\Sigma_{g}^s)$ only when $s=m$.



\subsection{The orientable double cover}\label{Orientable:Double:Cover} 
 The topological \emph{orientable double cover} of the non-orientable compact surface $N_{g+1}$ can be constructed (up to isomorphism) as follows. Consider a closed orientable surface $F_g$ embedded in $\R^3$ such that $F_g$ is invariant under reflections in the $xy$-, $yz$-, and $xz$- planes. {We take the orientation in $F_g$ induced by the embedding}. Let $\sigma :F_g \to F_g$ be the orientation reversing diffeomorphism  
$$  
    \sigma(x, y, z)  =  (-x, -y, -z) .
$$
Then the quotient  $F_g  / \langle \sigma\rangle$ is homeomorphic to $N_{g+1}$ and the natural projection  $\pi \colon F_g  \to N_{g+1}$ is a double cover of $N_{g+1}$ such that $\sigma$ is a covering transformation.

\begin{notation}\label{MAINotation}  Let $1 \leq \ell \leq n$, and take $s:=2n$ and $m:=2\ell$ throughout this paper to make our notation coherent with the previous section. Consider $\tilde X:=\{ p_1,p_2,\ldots, p_{2n} \}$ a collection of $s$ distinct distinguished points in $F_g$ such that $\sigma(p_{2i} )= p_{2i-1}$ for all $1\leq i\leq n$. Then $\pi(p_{2i})=\pi(p_{2i-1})$ for all $1\leq i\leq n$, and  $X:=\pi(\tilde X)$ gives a  collection of $n$ distinguished distinct points in $N_{g+1}$. Let $F_{g}^{s}:=F_g- \tilde{X}$ and $N_{g+1}^{n}:=N_{g+1}-{X}$.  Then the projection $\pi$ restricts to $F_{g}^{s}$, and gives the orientable double cover of $N_{g+1}^{n}$, which we denote again by $\pi:F_{g}^{s}\to N_{g+1}^{n}$.

\begin{figure}[!ht]
    \centering
    \begin{subfigure}[b]{0.7\textwidth}
        \centering
        \includegraphics[width=\textwidth]{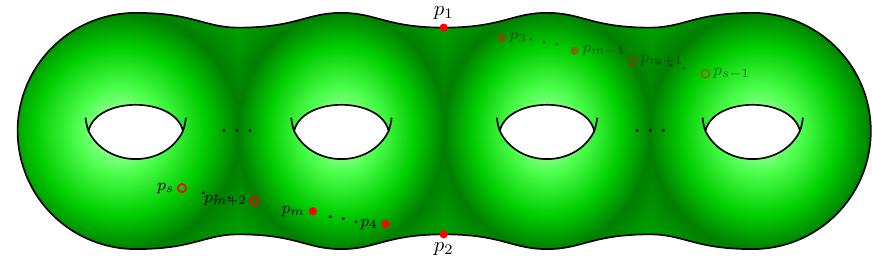} 
        \caption{Model surface in the case of even genus}
        \label{subfig:Model:Surface:Even}
    \end{subfigure}
    \hfill
    \begin{subfigure}[b]{0.8\textwidth}
        \centering
        \includegraphics[width=\textwidth]{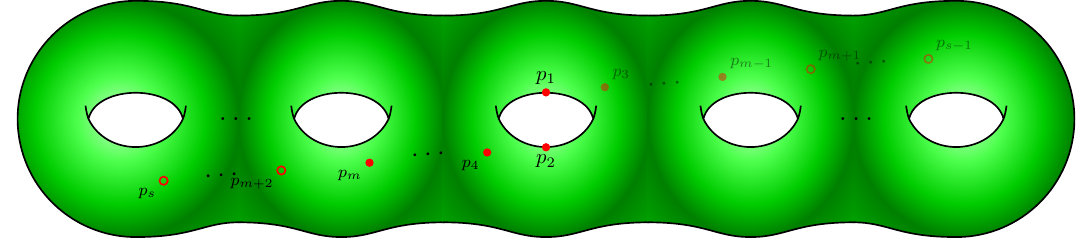} 
        \caption{Model surface in the case of odd genus}
        \label{subfig:Model:Surface:Odd}
    \end{subfigure}
    {\centering \caption{Model surface $F_0=F_g-P$ with sets of {\it punctures} $P$ and of {\it marked points} $\Delta$} \label{fig:Model:Surface}}
\end{figure}

Take a subset $\Delta=\{p_1,\ldots p_m\}$ of $\tilde{X}$.  Then $\Delta$ is a set of $m$  {\it marked points} in $F_0=F-P$, where  $P = \{ p_{m+1}, \ldots, p_s \}$.  Moreover, $\pi(\Delta)$ gives us a set  of $\ell$ {\it marked points} in $N_0=N_{g+1}-\pi(P)$.  An explicit model of the surface $F_0$ is shown in \cref{fig:Model:Surface}.          
\end{notation}



\subsection{Mapping class groups, decorated Teichmüller spaces and the orientable double cover}\label{Embed:Teich:Non:Orient} 

The orientable double cover $\pi$ induces a group homomorphism 
\[ 
    \Theta  \colon  \modgsn   \to  \modgso, 
\] 
 which is injective for $g \geqslant 2$ if $n=0$  and for all $g \geqslant 0$ if $n\geqslant 1$; see \cite[Theorem 3.2]{Nestor:Xico} and references therein. Furthermore, notice that $\sigma:F_g^s\rightarrow F_g^s$ represents a mapping class in $\Mod^{\pm}(F_{g}^{s})$ and via the monomorphism $\Theta$ we have
\[
    \modgsn\cong \modgso \cap C_{\Mod^{\pm}(F_{g}^{s})}(\sigma),
\]
where $C_{\Mod^{\pm}(F_{g}^s)}(\sigma)$ denotes the centralizer in $\Mod^{\pm}(F_{g}^{s})$ of the mapping class represented by the diffeomorphism $\sigma$.  Therefore, under the monomorphism $\Theta$, the groups $\PMod(N_{g+1}^{n})$ and $\Mod(N_{g+1}^{n})$ can be identified as subgroups of $C_{\Mod^{\pm}(F_{g}^{s})}(\sigma)$.

Furthermore, since $\Delta$ and $P$ are $\sigma$-invariant, $\sigma$ also represents a mapping class in $\Mod^{\pm}(F_0,\Delta)$ and  $\Theta$ restricts to a monomorphism from $\Mod(N_0,\pi(\Delta))$ to $\Mod^{\pm}(F_0,\Delta)$. Moreover, 
\[
    \Mod(N_0,\pi(\Delta))\cong \Mod(F_0,\Delta)\cap C_{\Mod^{\pm}(F_0,\Delta)}(\sigma),
\]
where $C_{\Mod^{\pm}(F_0,\Delta)}(\sigma)$ is the centralizer in $\Mod^{\pm}(F_0,\Delta)$ of the mapping class represented by $\sigma$.

The orientable double cover $\pi$ also induces a topological embedding
  \[  \pi^*\colon \T(N_{g+1}^{n})\to \T(F_{g}^{s}).
\]
The function $\pi^*$ is actually an isometric embedding with respect to the corresponding Teichmüller metrics; see for example \cite[Section 4.5]{Sep92} and \cite[Section 4]{Nestor:Xico}. Furthermore, \cite[Theorem 4.5.1]{Sep92} and \cite[Theorem 4.3]{Nestor:Xico} show that the image of $\pi^*$ is given by  
 $$\pi^*(\T(N_{g+1}^{n}))=\T (F_{g}^{s})^{\sigma}$$
where $\T (F_{g}^{s})^{\sigma}:=\{ R\in\T (F_{g}^{s}): \sigma\cdot R = R\}$ is the set of fixed points of the action of (the mapping class represented by) $\sigma$ on $\T(F_{g}^{s})$. 

The embedding $\pi^*\colon \T(N_{g+1}^{n})\to \T(F_{g}^{s})$ and the monomorphism $\Theta  \colon  \modgsn   \to  \modgso$ are compatible with the actions of the mapping class groups on the
corresponding Teichmüller spaces: for any $g\in\modgsn$, we have $\pi^*(g\cdot R)=\Theta(g)\cdot \pi^*(R)$.  

\begin{proposition}\label{prop:FixNonOrient}
The Teichmüller space $\T(N_{g+1}^n)$ of a non-orientable surface  is  $\Mod(N_{g+1}^n)$-equivariantly homeomorphic to the  subspace $\T (F_{g}^{s})^{\sigma}$ of the Teichmüller space $\T(F_{g}^{s})$ of its orientable double cover.
\end{proposition}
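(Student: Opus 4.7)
The plan is to read off the result from the machinery already assembled in the excerpt: the embedding $\pi^{*}\colon \T(N_{g+1}^{n})\to \T(F_{g}^{s})$, its identification with the $\sigma$-fixed subspace, and the compatibility with the monomorphism $\Theta$. First I would take $\pi^{*}$ and regard it as a continuous injection into $\T(F_{g}^{s})$. Since $\pi^{*}$ is an isometric embedding with respect to the Teichmüller metrics, it is in particular a topological embedding onto its image. Next I would invoke \cite[Theorem 4.5.1]{Sep92} (equivalently \cite[Theorem 4.3]{Nestor:Xico}), quoted just above the statement, which identifies $\pi^{*}(\T(N_{g+1}^{n}))=\T(F_{g}^{s})^{\sigma}$. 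Combining these two facts gives that $\pi^{*}$ restricts to a homeomorphism
\[
    \pi^{*}\colon \T(N_{g+1}^{n})\xrightarrow{\ \cong\ } \T(F_{g}^{s})^{\sigma}.
\]

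For the equivariance, I would use the compatibility relation $\pi^{*}(g\cdot R)=\Theta(g)\cdot \pi^{*}(R)$ for all $g\in\Mod(N_{g+1}^{n})$ and $R\in\T(N_{g+1}^{n})$. Via $\Theta$, the group $\Mod(N_{g+1}^{n})$ is identified with a subgroup of $C_{\Mod^{\pm}(F_{g}^{s})}(\sigma)$, so its action preserves the fixed-point set $\T(F_{g}^{s})^{\sigma}$. Therefore the restricted action of $\Mod^{\pm}(F_{g}^{s})$ to this subgroup provides a genuine action on $\T(F_{g}^{s})^{\sigma}$, and the displayed compatibility says exactly that $\pi^{*}$ intertwines the $\Mod(N_{g+1}^{n})$-action on $\T(N_{g+1}^{n})$ with this induced action on $\T(F_{g}^{s})^{\sigma}$.

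Putting both pieces together, $\pi^{*}$ is the desired $\Mod(N_{g+1}^{n})$-equivariant homeomorphism. There is no real obstacle here: the proof is an assembly of already-cited theorems. The one point that merits a line of care is checking that the action of $\Mod(N_{g+1}^{n})$ on $\T(F_{g}^{s})^{\sigma}$ obtained by restricting the $\Mod^{\pm}(F_{g}^{s})$-action via $\Theta$ is well-defined, but this is automatic from $\Theta(\Mod(N_{g+1}^{n}))\subseteq C_{\Mod^{\pm}(F_{g}^{s})}(\sigma)$: if $\sigma\cdot R=R$ and $\Theta(g)$ commutes with $\sigma$, then $\sigma\cdot(\Theta(g)\cdot R)=\Theta(g)\cdot(\sigma\cdot R)=\Theta(g)\cdot R$.
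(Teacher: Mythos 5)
Your proof is correct and follows exactly the route the paper takes: the paper states this proposition as an immediate consequence of the preceding discussion (the isometric embedding $\pi^{*}$, the identification of its image with $\T(F_{g}^{s})^{\sigma}$ via \cite[Theorem 4.5.1]{Sep92} and \cite[Theorem 4.3]{Nestor:Xico}, and the compatibility $\pi^{*}(g\cdot R)=\Theta(g)\cdot\pi^{*}(R)$), giving no separate proof. Your extra check that the restricted action preserves the fixed-point set is a sensible, if routine, addition.
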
   

From our discussion above, it follows that the double cover $\pi$ induces a topological embedding  
$$\T(N_{g+1}^n;\pi(\Delta))\rightarrow\T(F_{g}^s;\Delta)\text{ given by }(R,{[\lambda_1:\ldots:\lambda_\ell]})\mapsto (\pi^*(R), {[\lambda_1:\lambda_1:\ldots:\lambda_\ell:\lambda_\ell])},$$ which is compatible with the monomorphism $ \Mod(N_0,\pi(\Delta))\to\Mod^{\pm}(F_0,\Delta)$ and the corresponding actions on the decorated Teichmüller spaces. Moreover, the analogous statement corresponding to \cref{prop:FixNonOrient} also holds. We denote by $\T(F_g^s;\Delta)^\sigma$  the subspace of $\T(F_g^s; \Delta)$ of fixed points of $\sigma$. 

\begin{proposition}\label{prop:NonOrientDecorated}
 The decorated Teichmüller space $\T(N_{g+1}^n;\pi(\Delta))$ of a non-orientable surface is $\Mod(N_0,\pi(\Delta))$-equivariantly homeomorphic to the subspace $\T(F_g^s;\Delta)^\sigma$ of the decorated Teichmüller space $\T(F_g^s; \Delta)$ of its orientable double cover.
\end{proposition}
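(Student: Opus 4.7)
The plan is to leverage the explicit product decomposition of decorated Teichmüller spaces together with \cref{prop:FixNonOrient}, which already handles the undecorated case. Recall that $\T(F_g^s;\Delta)$ is homeomorphic to $\T(F_g^s) \times {\rm{K}}^{m-1}$ and $\T(N_{g+1}^n;\pi(\Delta))$ to $\T(N_{g+1}^n) \times {\rm{K}}^{\ell-1}$, and under the former identification the action of $\sigma$ is diagonal: it acts on $\T(F_g^s)$ as studied in \cref{prop:FixNonOrient}, while on the open weight simplex ${\rm{K}}^{m-1}$ it permutes the projective coordinates via the transpositions $(2i-1,2i)$ for $1 \leq i \leq \ell$, since $\sigma(p_{2i}) = p_{2i-1}$.

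First I would compute $\T(F_g^s;\Delta)^\sigma$ one factor at a time. On the Teichmüller factor the fixed set is $\T(F_g^s)^\sigma \cong \T(N_{g+1}^n)$ by \cref{prop:FixNonOrient}. On ${\rm{K}}^{m-1}$, a projective class $[\lambda_1:\cdots:\lambda_m]$ is fixed by the above permutation precisely when there exists $c > 0$ with $\lambda_{2i} = c \lambda_{2i-1}$ and $\lambda_{2i-1} = c \lambda_{2i}$ for every $i$; positivity of the weights forces $c^2 = 1$, hence $c = 1$, so the fixed set is the open subsimplex cut out by $\lambda_{2i-1} = \lambda_{2i}$, naturally homeomorphic to ${\rm{K}}^{\ell-1}$. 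Combining the two computations yields a homeomorphism $\T(F_g^s;\Delta)^\sigma \cong \T(N_{g+1}^n) \times {\rm{K}}^{\ell-1} \cong \T(N_{g+1}^n;\pi(\Delta))$, and a direct comparison shows that its inverse is the explicit embedding $(R,[\lambda_1:\cdots:\lambda_\ell]) \mapsto (\pi^*(R),[\lambda_1:\lambda_1:\cdots:\lambda_\ell:\lambda_\ell])$ displayed just before the statement.

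For the equivariance, the $\Mod(N_0,\pi(\Delta))$-action on $\T(N_{g+1}^n;\pi(\Delta))$ is itself diagonal: the standard mapping class action on the Teichmüller factor, together with the permutation action on ${\rm{K}}^{\ell-1}$ induced by the permutation on $\pi(\Delta)$. Under the monomorphism $\Theta$ recalled in \cref{Embed:Teich:Non:Orient}, an element of $\Mod(N_0,\pi(\Delta))$ corresponds to a class in $\Mod(F_0,\Delta)$ that centralizes $\sigma$ and therefore preserves each pair $\{p_{2i-1},p_{2i}\}$; its induced permutation on $\Delta$ is thus compatible with the one on $\pi(\Delta)$. Equivariance on the Teichmüller factor follows from the relation $\pi^*(g \cdot R) = \Theta(g) \cdot \pi^*(R)$ together with \cref{prop:FixNonOrient}, and equivariance on the weight factor is immediate from this compatibility of permutations.

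The main subtlety I expect lies in the bookkeeping for the equivariance step: one needs to verify that when a lift in $\Mod(F_0,\Delta)$ of an element of $\Mod(N_0,\pi(\Delta))$ acts on a paired weight $[\lambda_1:\lambda_1:\cdots:\lambda_\ell:\lambda_\ell]$, the result is again paired and matches the downstairs action on the weights of $\pi(\Delta)$. Once the indexing conventions fixed in \cref{MAINotation} are tracked carefully this is routine, and no new ingredients beyond \cref{prop:FixNonOrient}, the centralizer description of $\Mod(N_0,\pi(\Delta))$, and the positivity of the weights are needed.
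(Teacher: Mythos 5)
Your proposal is correct and follows essentially the same route as the paper, which simply asserts the result as a consequence of the preceding discussion: the product decomposition $\T(F_g^s;\Delta)\approx\T(F_g^s)\times{\rm K}^{m-1}$ with diagonal $\sigma$-action, \cref{prop:FixNonOrient} on the Teichmüller factor, the elementary fixed-point computation forcing $\lambda_{2i-1}=\lambda_{2i}$ on the weight simplex, and compatibility of the doubling embedding with $\Theta$. Your write-up just makes explicit the bookkeeping the paper leaves implicit, so there is nothing substantively different to compare.
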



\section{Ideal triangulations and spines for decorated Teichmüller spaces}\label{Sec:Ideal:Triangulation}

In this section we recall an ideal triangulation of the decorated Teichmüller space $\T(F_g^s;\Delta)$ introduced in \cite{Ha86} and Harer's construction of a spine for $\T(F_g^s;\Delta)$ from this triangulation.   We show in \cref{restriction:harer:map} and  \cref{restriction:Retraction} how to obtain from Harer's setting an ideal triangulation and a spine for the decorated Teichmüller space $\T(N_{g+1}^n; \pi(\Delta))$,  this is done by identifying this space with the subspace $\T(F_{g}^s; \Delta)^{\sigma}$ using the orientable double cover as described in \cref{Orientable:Double:Cover}.


\subsection{Arc systems and Harer's complex of arcs}

Let us recall the definition of Harer's complex of arcs as introduced in \cite[Section 1]{Ha86}.

Consider the closed orientable  surface $F_g$, and the sets of points $\Delta$ and $P$ as defined in \cref{Teichmüller:orientable:nonorientable}. As before let $F_0=F_g-P$.
A properly embedded path in \( F_0 \) between two points of \( \Delta \) 
will be called a \( \Delta \)-arc. The isotopy class in \( F_0 \) (rel \( \Delta \)) \( [\alpha_0, \ldots, \alpha_k] \) of a family of \( \Delta \)-arcs will be called a {\it rank-\( k \) arc system} if:

\begin{enumerate}
\item \( \alpha_i \cap \alpha_j \subset \Delta \) for distinct \( i \) and \( j \), and
 \item for each connected component \( B \) of the surface obtained by \textit{ splitting \( F_0 \) along \( \alpha_0, \ldots, \alpha_k \)}, the Euler characteristic of the double of \( B \) along \( \partial B - \Delta \) is negative. 
\end{enumerate}

Condition (2) ensures that $\Delta$-arcs are not null-homotopic (rel $\Delta$), and no two distinct $\Delta$-arcs are homotopic (rel $\Delta$).

\begin{remark} 
    Let us be more precise about the \textit{connected components} appearing in condition (2). Consider a collection of $\Delta$-arcs \( \alpha_0, \ldots, \alpha_k \) such that  condition (1) holds. Let $B^o$ be one of the connected components of the open surface $F_0-\cup_{i=0}^k \alpha_i$. Hence $B^o$ is the interior of a surface $B$ with non-empty boundary, and we have an \textit{attaching map} $\phi_B\colon \partial B\to \cup_{i=0}^k \alpha_i$. Since $\cup_{i=0}^k \alpha_i$ is canonically a 1-dimensional CW-complex (with 0-skeleton contained in $\Delta$), we can pull-back that cellular structure to $\partial B$. By an abuse of notation we call $\Delta$ the $0$-skeleton of $\partial B$. Whenever $B$ is homeomorphic to a disk or a punctured disk, we consider it as a polygon or a punctured polygon respectively, and every edge will be labeled with the corresponding $\Delta$-arc in the system. 
\end{remark}

\begin{definition}[Harer's complex of arcs] Let \(\calA=\calA(\Delta)\) be the  simplicial complex that has  a \(k\)-simplex \(\langle \alpha_0, \ldots, \alpha_k\rangle\) for each rank-\(k\) arc system in \(F_0\) and such that \(\langle\beta_0, \ldots, \beta_l\rangle\) is  a face of \(\langle\alpha_0, \ldots, \alpha_k\rangle\) if \(\{[\beta_0], \ldots, [\beta_l]\}\) is contained in \(\{[\alpha_0], \ldots, [\alpha_k]\}\). 
\end{definition}

The dimension of the complex $\mathcal{A}(\Delta)$ is $6g-7+2s+m$; see for example  \cite[Theorem 2.2]{CJRLASS} for an explicit computation. The  natural action of the group $\modeFD$ on $\Delta$-arcs induces an action on $\calA(\Delta)$ by simplicial automorphisms.

We say an arc system $\asA$ \emph{fills up} $F_0$  provided every point in $\Delta$ is the initial or final point of an arc in $\asA$, and each connected  component $B$ of the splitting of $F_0$ along $\asA$ is either homeomorphic to a disk or to a once-puntured disk.  Let $\calA_\infty(\Delta)$ be the codimension $2$ subcomplex of $\calA(\Delta)$ formed by all the arc systems that do not fill up $F_0$.  Notice that the action of $\modeFD$ on $\calA(\Delta)$ preserves the subcomplex $\calA_\infty(\Delta)$.

\begin{remark}
   The action of $\modeFD$ on $\mathcal{A}(\Delta)$ has inversions. For instance,  the involution $\sigma$ acts as an inversion in the $1$-simplex $\langle \alpha_0 ,\sigma(\alpha_0) \rangle $, where $\alpha_0$ is the $\Delta$-arc  in \cref{fig:MaxAS:A}. To avoid this we work  with the barycentric subdivision.  
\end{remark}

Let us denote by $\calA^0$ and $\calA_\infty^{0}$ the barycentric subdivision of the complexes $\calA(\Delta)$ and $\calA_\infty(\Delta)$, respectively.  More explicitly, $\calA^0$ is the simplicial complex with vertex set given by the set of all arc systems in $F_0$, and a $k$-simplex is given by a chain $\asA_0 \subsetneq \cdots \subsetneq \asA_k$ of arc systems. Notice that the group $\modeFD$ acts simplicially  without inversions on $\calA^0$, and it restricts to an action of $\modeFD$ on $\calA^0-\calA_\infty^0$.


\subsection{Ideal triangulations of decorated Teichmüller spaces.} Let $s\geq 1$ and $2g+s>2$.
The following result states that $\mathcal{A}(\Delta) - \mathcal{A}_{\infty}(\Delta)$ defines an \textit{ideal} triangulation of the decorated Teichmüller space $\T(F_g^s;\Delta)$.

\begin{theorem}\cite[Theorem 1.3]{Ha86}\label{Theo:HomeoTeich}
There exists a $\Mod^{\pm}(F_0;\Delta)$-equivariant homeomorphism
\[
    \phi\colon \T(F_g^s;\Delta) \longrightarrow \mathcal{A}(\Delta) - \mathcal{A}_{\infty}(\Delta).
\]
\end{theorem}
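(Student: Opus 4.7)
The plan is to build $\phi$ via the Bowditch--Epstein--Penner convex hull construction adapted to the decorated setting. Given a decorated hyperbolic structure $(R,\lambda)$ with $\lambda=[\lambda_1:\cdots:\lambda_m]$, first choose a representative with $\sum_i \lambda_i = 1$ and place a horocycle $h_i$ of size $\lambda_i$ at each marked cusp $p_i \in \Delta$. Lifting to the universal cover $\dbH^2$, one obtains a $\pi_1(F_g^s)$-invariant family of horoballs centered at the lifts of $\Delta$-cusps, while the lifts of $P$-cusps remain as bare ideal points on $\partial_\infty \dbH^2$. Take the closed convex hull $C$ in $\overline{\dbH^2}$ of the union of these horoballs together with the lifts of $P$. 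The boundary of $C$ (away from the interiors of the horoballs) is a union of geodesic faces whose vertices are centers of horoballs or lifts of $P$; projecting back to $R$ yields a cellular decomposition of $F_0$ into ideal polygons and once-punctured ideal polygons whose $1$-cells are geodesic $\Delta$-arcs $\alpha_0,\ldots,\alpha_k$.

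This collection defines an arc system $\asA(R,\lambda)=[\alpha_0,\ldots,\alpha_k]$ in $F_0$, and since every $2$-cell is a disk or a once-punctured disk, $\asA(R,\lambda)$ fills up $F_0$; in particular $\langle \alpha_0,\ldots,\alpha_k\rangle$ is an open simplex of $\calA(\Delta) - \calA_\infty(\Delta)$. To locate $\phi(R,\lambda)$ inside this simplex, assign to each arc $\alpha_i$ a positive simplicial coordinate $x_i$ derived from Penner's $\lambda$-length or, equivalently, from the horocyclic segment lengths that $\alpha_i$ cuts off on the $h_j$'s, and declare the $i$-th barycentric coordinate of $\phi(R,\lambda)$ to be $x_i / \sum_j x_j$. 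Because rescaling all $\lambda_i$ by a common factor rescales every $x_i$ by a common factor, the barycentric point depends only on the projective class, so $\phi\colon \T(F_g^s;\Delta) \to \calA(\Delta) - \calA_\infty(\Delta)$ is well-defined and, by continuous dependence of Penner coordinates on the hyperbolic structure, continuous.

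To prove $\phi$ is a homeomorphism, construct an explicit inverse: given a filling arc system $\asA$ and positive barycentric weights $(x_0,\ldots,x_k)$ on $\langle\asA\rangle$, realize each complementary region of $\asA$ as an ideal polygon, or a once-punctured ideal polygon, in $\dbH^2$ and glue adjacent regions along their common edge via the shear prescribed by Penner's coordinate formulas applied to the $x_i$'s. This produces a unique decorated hyperbolic surface $(R,\lambda)$ realizing the given combinatorial and metric data, yielding a continuous $\phi^{-1}$. Equivariance under $\modeFD$ is immediate from the naturality of the entire construction, since convex hulls, $\lambda$-lengths and polygon gluings are intrinsic to the decorated structure and are preserved by both orientation-preserving and orientation-reversing diffeomorphisms that fix the subset $\Delta$. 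The main technical obstacle is verifying that the convex hull really produces a locally finite polyhedral decomposition whose faces are disks or once-punctured disks --- handled via the Epstein--Penner convexity argument, with extra care near the $P$-cusps where no horocycle is present --- and checking that the simplicial coordinates parametrize each open cell bijectively, which is precisely what forces the decoration $\lambda$ to be a projective rather than an absolute weight.
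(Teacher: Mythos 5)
Your route is genuinely different from the one the paper follows: the paper recalls Harer's construction of $\phi$ via the Jenkins--Strebel quadratic differential with double poles at the decorated points (the arc system comes from the imaginary trajectories, the weights from the differential), whereas you use the Bowditch--Epstein--Penner convex hull of horoballs. The paper itself notes that Penner's hyperbolic construction is only carried out in the case $s=m$, i.e.\ when \emph{every} puncture is decorated, and that is exactly where your argument has a genuine gap: the treatment of the undecorated punctures $P$. You propose to take the convex hull of the horoballs at the $\Delta$-cusps \emph{together with the lifts of the $P$-cusps as bare ideal points}. If a lift $\xi$ of a $P$-cusp is a vertex of the hull, then the supporting faces at $\xi$ have $\xi$ as an ideal vertex, and their edges project to geodesic arcs with an endpoint at a $P$-puncture. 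Such arcs are not $\Delta$-arcs, so the resulting cell is not a simplex of $\calA(\Delta)$ at all; this also contradicts your own description of the complementary regions as once-punctured polygons, since a puncture that is a vertex of the hull cannot simultaneously lie in the interior of a face. In Harer's picture the $P$-punctures must end up in the \emph{interiors} of complementary regions, which in the convex hull language means the $P$-lifts should be \emph{excluded} from the hull, and one must then prove that the boundary of the hull near each undecorated parabolic fixed point $\xi$ is a single $\langle\gamma_\xi\rangle$-invariant infinite-sided face with finitely many edge-orbits (projecting to a once-punctured polygon), rather than an infinite sequence of faces accumulating at $\xi$. This local finiteness statement at undecorated cusps is precisely the delicate point that the Strebel-differential approach avoids, and your sketch does not supply it.

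A secondary soft spot: for the inverse map you invoke ``Penner's coordinate formulas'' to rebuild a decorated surface from a filling arc system and positive simplicial weights. This is standard when the arc system is maximal (an ideal triangulation), but for non-maximal filling systems the bijectivity of simplicial coordinates on each open cell is a real theorem (the surjectivity part of Penner's cell decomposition), not a formula, and it would again have to be established in the partially decorated setting $m<s$. Your equivariance argument, including under orientation-reversing classes, is fine, since a Riemann surface and its mirror carry isometric hyperbolic metrics and the convex hull is metrically intrinsic; the paper makes the corresponding observation on the Strebel side by noting $\overline{\omega}_R=\omega_{R^*}$.
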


Using the hyperbolic geometry perspective, Penner gave an alternative construction of this ideal triangulation in \cite{Penner87} in the case $s=m$.

\begin{remark} Harer's result \cite[Theorem 1.3]{Ha86} actually states that $\phi$ is a $\PMod(F_g^s)$-equivariant homeomorphism. We briefly recall Harer's definition of $\phi$, and point out that this homeomorphism is in fact $\modeFD$-equivariant.  See also \cite[Chapter 2]{Harer88Moduli} for $m=s=1$ and \cite[Section 2.4]{HarerGJ01} and \cite[Section 5.3]{Mondello11} for $m=s$. 

 Let $(R,\lambda)\in \T(F_g^s;\Delta)$. Then $R$ is a marked Riemann surface with distinguished points $q_1,\ldots, q_s$ and a marking $f:R\rightarrow F_g$, and $\lambda=[\lambda_1:\ldots: \lambda_m]$ is a projective class of positive weights on the $m$ points of $\Delta_R:=f^{-1}(\Delta)=\{q_1,\ldots,q_m \}$.  By a result of Strebel  \cite[Part 9,10 and 11]{Strebel68}, \cite[Theorem 5.2]{Mondello11}, there exists a unique (up to scalar multiplication) quadratic differential $\omega_R=\{\omega_{\alpha}(z_{\alpha})dz_{\alpha}^2\}$ on $R$ that has double poles 
in each point of $\Delta_R$ and no other poles, and such that the real trajectories of $\omega_{R}$ are closed. Moreover,  the singular trajectories of the differential $\omega_R$ decompose $R_0=R-\{q_{m+1},\ldots,q_s\}$ into $m$ disks $G_1,\ldots,G_m$, where each disk  $G_i$ contains a point $q_i\in\Delta_R$, and the projective class of the radii of $\{G_i\}$ is equal to $\lambda$.
The imaginary trajectories of $\omega_R$ that begin and end at points in $\Delta_R$ break into parallel families. We select one leaf of each family and apply $f$ to obtain an arc system $[\alpha_0, \ldots, \alpha_k]$  in $F_g$. These arcs, combined with the projective class of weights determined by $\omega_R$, give a point $\phi((R,\lambda))$ in the interior of the simplex $\langle \alpha_0, \ldots, \alpha_k \rangle$ which lies in $\mathcal{A}(\Delta)-\mathcal{A}_{\infty}(\Delta)$.  The fact that the homeomorphism $\phi$ is equivariant under the action of $\Mod^{\pm}(F_0;\Delta)$ follows from noticing that $\bar{\omega}_R=\{\omega_{\alpha}(\bar{z}_{\alpha})d\bar{z}_{\alpha}^2\}$ is (up to scalar multiplication)  the Strebel quadratic differential $\omega_{R^*}$ on $R^*$, and hence it generates the same imaginary and real trajectories as $\omega_R$.
\end{remark}

Now we let $1\leq \ell\leq n$, $g+n>1$, and take $s:=2n$ and $m:=2\ell$. Consider \cref{MAINotation} and the orientable double cover $\pi: F_g^s\rightarrow N_{g+1}^n$ as described in \cref{{Orientable:Double:Cover}}. Notice that the homeomorphism $\phi$  from \cref{Theo:HomeoTeich} induces a   $C_{\modeFD}(\sigma)$-equivariant homeomorphism 
\[
\phi^\sigma \colon \T(F_g^s;\Delta)^{\sigma} \longrightarrow (\mathcal{A}^0 - \mathcal{A}_{\infty}^0)^{\sigma},
\] 
where $(\mathcal{A}^0 - \mathcal{A}_{\infty}^0)^{\sigma}$ es the subspace of $\mathcal{A}^0 - \mathcal{A}_{\infty}^0$ consisting of points fixed by $\sigma$.  It is worth noticing that $(\mathcal{A}^0 - \mathcal{A}_{\infty}^0)^{\sigma}$ is a simplicial complex since we are working in the baricentric subdivision of $\mathcal{A}$.

On the other hand, from \cref{Embed:Teich:Non:Orient} we know that the double cover $\pi$ identifies $\Mod(N_0,\pi(\Delta))$ with a subgroup of $C_{\Mod^{\pm}(F_0,\Delta)}(\sigma)$. Therefore, by combining $\phi^\sigma$ with \cref{prop:NonOrientDecorated}, we obtain an ideal triangulation $(\mathcal{A}^0 - \mathcal{A}_{\infty}^0)^{\sigma}$ for the decorated Teichmüller space of a non-orientable surface $N_{g+1}^n$.

\begin{corollary}\label{restriction:harer:map}
There is a $\Mod(N_0;\pi(\Delta))$-equivariant homeomorphism
\[
\varphi\colon \T(
N_{g+1}^n;\pi(\Delta)) \longrightarrow (\mathcal{A}^0 - \mathcal{A}_{\infty}^0)^{\sigma}.
\] 
\end{corollary}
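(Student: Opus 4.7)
The plan is to obtain $\varphi$ as the composition of two equivariant homeomorphisms that have already been set up in the paper, so the proof is essentially bookkeeping and a verification of compatibility of the group actions.

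First, I would invoke \cref{prop:NonOrientDecorated}, which provides a $\Mod(N_0,\pi(\Delta))$-equivariant homeomorphism
\[
\Psi\colon \T(N_{g+1}^n;\pi(\Delta)) \longrightarrow \T(F_g^s;\Delta)^{\sigma},
\]
realizing the decorated Teichmüller space of the non-orientable surface as the $\sigma$-fixed locus inside the decorated Teichmüller space of the orientable double cover. Here the action of $\Mod(N_0,\pi(\Delta))$ on the target is via the monomorphism $\Mod(N_0,\pi(\Delta)) \hookrightarrow C_{\Mod^{\pm}(F_0,\Delta)}(\sigma)$ discussed in \cref{Embed:Teich:Non:Orient}.

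Next, I would restrict the Harer homeomorphism of \cref{Theo:HomeoTeich} to the $\sigma$-fixed loci. Since $\phi$ is $\Mod^{\pm}(F_0;\Delta)$-equivariant and $\sigma$ represents a mapping class in this group, $\phi$ commutes with the action of $\sigma$ on both sides, so it sends $\T(F_g^s;\Delta)^{\sigma}$ homeomorphically onto $(\calA(\Delta)-\calA_\infty(\Delta))^{\sigma}$. Passing to the barycentric subdivision on the target is the reason for replacing $\calA$ by $\calA^0$: as noted in the remark preceding the definition of $\calA^0$, the action of $\modeFD$ on $\calA$ has inversions (for instance on the edge $\langle \alpha_0,\sigma(\alpha_0)\rangle$), so fixed-point sets are not subcomplexes of $\calA$. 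After subdivision the $\modeFD$-action is simplicial without inversions, so $(\calA^0 - \calA_\infty^0)^{\sigma}$ inherits a genuine simplicial structure and $\phi$ induces the desired restriction
\[
\phi^\sigma \colon \T(F_g^s;\Delta)^{\sigma} \longrightarrow (\calA^0 - \calA_\infty^0)^{\sigma},
\]
which is equivariant for the centralizer $C_{\modeFD}(\sigma)$.

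Finally, I would set $\varphi := \phi^\sigma \circ \Psi$. Equivariance with respect to $\Mod(N_0,\pi(\Delta))$ follows because $\Psi$ is already equivariant by \cref{prop:NonOrientDecorated}, and $\phi^\sigma$ is equivariant for the centralizer $C_{\modeFD}(\sigma)$, inside which $\Mod(N_0,\pi(\Delta))$ sits by the identification recalled in \cref{Embed:Teich:Non:Orient}. Since composition of homeomorphisms is a homeomorphism, $\varphi$ is the required $\Mod(N_0,\pi(\Delta))$-equivariant homeomorphism. There is no real obstacle here; the only point to be attentive to is the switch from $\calA$ to $\calA^0$, which is what makes the fixed-point set $(\calA^0-\calA_\infty^0)^{\sigma}$ a well-behaved simplicial subcomplex and lets the restriction of $\phi$ be interpreted simplicially.
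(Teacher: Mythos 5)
Your proposal is correct and follows essentially the same route as the paper: restrict Harer's equivariant homeomorphism $\phi$ to the $\sigma$-fixed loci to get $\phi^{\sigma}$, then compose with the identification of $\T(N_{g+1}^n;\pi(\Delta))$ with $\T(F_g^s;\Delta)^{\sigma}$ from \cref{prop:NonOrientDecorated}, using that $\Mod(N_0,\pi(\Delta))$ embeds in $C_{\Mod^{\pm}(F_0,\Delta)}(\sigma)$. Your added remark on why the barycentric subdivision is needed matches the paper's own discussion.
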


In \cref{sigma:arc:simplex:dim} below we show that $(\mathcal{A}^0 - \mathcal{A}_{\infty}^0)^{\sigma}$ has dimension $3g+2n+\ell-4$.



\subsection{Spines for decorated Teichmüller spaces}\label{Sec:SpineY}

In \cite[Section 2]{Ha86}, Harer used the ideal triangulation $\calA^0-\calA_\infty^0$ of the decorated Teichmüller space $\T(F_g^s;\Delta)$ to construct a {\it spine}: a cell complex  inside $\T(F_g^s;\Delta)$ onto which $\T(F_g^s;\Delta)$ may be $\PMod(F_{g}^s)$-equivariantly {deformation} retracted. In this section, we recall Harer's construction of the spine $\mathcal{Y}$. By identifying once again $\T(N_{g+1}^n;\pi(\Delta))$ with the subspace $\T(F_{g}^s; \Delta)^{\sigma}$ via the orientable double cover,  we see in \cref{restriction:Retraction} that the $\sigma$-fixed points of $\calY$ give a spine for the decorated Teichmüller space $\T(N_{g+1}^n;\pi(\Delta))$.  In \cref{Sec:Spine} we compute its dimension.

\begin{definition}[Harer's spine] Let $\calY(=\calY_g^{s,m})$ be the subcomplex of $\calA^0$ spanned by the arc systems that fill up $F_0$. Therefore, the vertices of $\calY$ consist of the set of arc systems that fill up $F_0$ and a $k$-simplex of $\calY$ is given by a chain of arc systems
$\asA_0 \subsetneq \cdots \subsetneq \asA_k$,
where  all the $\asA_i$ fill up $F_0$.  We call {\it Harer's spine} both the complex $\calY$ and the subspace $\phi^{-1}(\calY)\subset \T(F_g^s;\Delta)$.
\end{definition}

\begin{remark}
    In \cite[Section 2]{Ha86}, Harer defines two complexes $Y$ and $Y^0$, the first one as the dual complex of $\calA$ and the second as a subcomplex of $\calA^0$. In our notation $\calY$ coincides with $Y^0$. Harer also points out that $Y^0$ is the baricentric subdivision of $Y$; hence the geometric realization of these complexes are homeomorphic.
\end{remark}

\begin{theorem}\cite[Theorem 2.1]{Ha86}\label{Theo:Retraction}
The complex $\mathcal{A}^0$ equivariantly deformation retracts onto the complex $\mathcal{Y}$. This retraction provides a $\modeFD$-equivariant homotopy equivalence between $\mathcal{A}^0 - \mathcal{A}_\infty^0$ and $\mathcal{Y}$.
\end{theorem}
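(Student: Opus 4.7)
The plan is to construct an explicit $\modeFD$-equivariant strong deformation retraction $r_t\colon \calA^0 \to \calA^0$ with $r_0 = \mathrm{id}$ and $r_1(\calA^0) = \calY$, fixing $\calY$ pointwise. Since the subcomplex $\calA^0 - \calA^0_\infty$ is determined by a condition on the maximal vertex of each chain—namely that it represents a filling arc system—which is preserved by the collapse below, the retraction will restrict to the asserted equivariant homotopy equivalence between $\calA^0 - \calA^0_\infty$ and $\calY$.

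The starting observation is that the property of filling up $F_0$ is upward-closed in the poset of arc systems: if $\asA \subseteq \asB$ and $\asA$ fills up $F_0$, then so does $\asB$, because the arcs of $\asB$ not in $\asA$ lie inside disk or once-punctured-disk components of the splitting of $F_0$ along $\asA$ and only subdivide them into further disks or once-punctured disks. Hence, for any simplex $\sigma = \langle \asA_0, \ldots, \asA_k\rangle$ of $\calA^0$, there is a well-defined \emph{filling level} $j(\sigma) \in \{0, \ldots, k+1\}$ such that $\asA_i$ fills up $F_0$ precisely when $i \geq j(\sigma)$. One has $\sigma \in \calY \iff j(\sigma) = 0$, $\sigma \in \calA^0_\infty \iff j(\sigma) = k+1$, and the simplices of $\calA^0 - \calA^0_\infty$ outside $\calY$ are characterized by $1 \leq j(\sigma) \leq k$. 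The quantity $j(\sigma)$ is manifestly $\modeFD$-equivariant, since $\modeFD$ acts by order-automorphisms on the set of arc systems and preserves the filling-up property.

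I would then build the retraction via discrete Morse theory: produce an $\modeFD$-equivariant acyclic matching on the simplices of $\calA^0 \setminus \calY$ whose only critical cells are those in $\calY$, and appeal to Forman's theorem to convert it into the desired equivariant deformation retraction. The proposed pairing sends each non-critical simplex $\sigma$ with $j(\sigma) \geq 1$ to the codimension-one face $\sigma \setminus \{\asA_{j(\sigma)-1}\}$, obtained by deleting the maximal non-filling vertex of the chain. Removing $\asA_{j-1}$ lowers the filling level to $j-1$, and a downward induction on $j$ is intended to show that every non-critical simplex appears in exactly one such pair, and that when $j=1$ the collapse deposits the pair on $\calY$.

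The hard part is verifying that this pairing is genuinely injective and acyclic in equivariant generality. Equivariance is automatic because $j(\sigma)$ and the chain structure are intrinsic, but one must check that no codimension-one face arises as the partner of two distinct $\sigma$'s and that the induced Hasse diagram of the matching has no directed cycles. The delicate cases are chains where the maximal non-filling vertex $\asA_{j-1}$ admits several distinct ``upward'' extensions by arc systems of the same filling level $j$; to handle this I would refine the rule to proceed stagewise in $j$, matching simplices of filling level $j$ only after those of filling level $j+1$ have been dealt with. An alternative that sidesteps this combinatorial bookkeeping is a Quillen fiber argument applied to the inclusion of posets (filling arc systems) $\hookrightarrow$ (all arc systems), reducing the problem to showing that the poset of filling extensions of a given non-filling arc system has contractible order complex—a claim that can be attacked by induction on the complexity of the non-disk components of the corresponding complement.
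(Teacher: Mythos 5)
Your proposal sets out to reprove Harer's theorem from scratch, whereas the paper does not: it cites \cite[Theorem~2.1]{Ha86} and only supplies the (cheap) upgrade of equivariance from $\PMod(F_g^s)$ to $\modeFD$, observing that Harer's inductive collapse of the open stars of the non-filling vertices, ordered by rank, is defined purely in terms of data preserved by the larger group. Measured as a proof of Harer's theorem itself, your argument has a step that fails as stated. The rule sending a chain $c=\langle \asA_0\subsetneq\cdots\subsetneq\asA_k\rangle$ with filling level $j\geq 1$ to the facet $c\setminus\{\asA_{j-1}\}$ is not an acyclic matching, and no stagewise reordering repairs it, because a deletion-only rule cannot pair each non-critical cell with exactly one partner. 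Concretely: if $j\geq 2$, the facet $c\setminus\{\asA_{j-1}\}$ still has filling level $j-1\geq 1$ and is therefore itself assigned a partner, so it lies in two pairs; and if $j=1$, the facet lies in $\calY$, which you have declared critical, and moreover every chain of $\calY$ arises as the image of every chain obtained by inserting a non-filling proper subsystem below its minimal vertex, so injectivity fails. A genuine matching must toggle a canonical vertex (pair $c$ with $c\,\triangle\,\{v\}$ consistently in both directions), and the chain structure alone supplies no such canonical choice.

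The deeper gap is that no purely order-theoretic argument can succeed: $\calY$ is the full subcomplex on an upward-closed set of vertices of $\calA^0$, and the order complex of a poset does not in general deformation retract onto the order complex of an upward-closed subposet. The indispensable geometric input --- that for each non-filling arc system the complex of arc systems properly containing it (equivalently, your Quillen fiber of filling extensions) is contractible, which Harer proves using the topology of the complementary subsurfaces --- is precisely what your proposal defers to an unproved claim in its final sentence; it is the entire content of the theorem. Note also that Quillen's fiber theorem would only yield a homotopy equivalence, not the equivariant \emph{deformation retraction} asserted here, and that $\calA^0-\calA^0_\infty$ is an open complement of a subcomplex rather than a subcomplex, so the restriction of the homotopy to it requires a separate (if routine) verification. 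The part of your argument that does align with the paper is the equivariance: since the filling level and the rank are intrinsic to the chain and preserved by $\modeFD$ (in particular by the involution $\sigma$), any collapse defined in these terms is automatically equivariant, which is exactly the observation the paper adds to Harer's proof.
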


\begin{remark} In \cite[Theorem 2.1]{Ha86} Harer only states the existence of a $\PMod(F_g^s)$-equivariant deformation retraction from $\calA^0-\calA_\infty^0$ onto $\calY$. However, from Harer's proof it follows that this deformation retraction is, in fact, $\modeFD$-equivariant.
Indeed, his proof is done inductively by collapsing the stars in $\calA^0$ of cells in $\calA_{\infty}^0$.
For each $k\geq 1$, starting with $\calA_{0}^0=\calA^0$,  Harer gives a $\Mod(F_0;\Delta)$-equivariant deformation retraction from $\calA_{k-1}^0$ onto $\calA_{k}^0$, the complex obtained from $\calA_{k-1}^0$ by removing the open stars of all vertices of $\calA_{\infty}^0$ of {\it weight} $k-1$ (i.e the ones given by arc systems $[\alpha_0,\ldots,\alpha_{k-1}]$ of rank $k-1$). This way, he obtains an equivariant deformation retraction from  $\calA^0$ onto $\calA_{\max}^0=\calY$, with $\max=\dim \calA_{\infty}^0+1$.
Since the action of $\sigma$ leaves invariant the set of vertices  of $\calA_{\infty}^0$ of {\it weight} $k-1$, then $\sigma$ leaves invariant the complex $\calA_k^{0}$, and the deformation retraction is in fact $\sigma$-equivariant.
 \end{remark}

Let  $\calY^{\sigma}$ the subcomplex of $\calY$ of $\sigma$-fixed points. Notice that $\calY^{\sigma}$ is a subcomplex of $(\mathcal{A}^0)^\sigma$, and  since $\calY\subset \calA^0-\calA^0_{\infty}$, then $(\mathcal{Y})^\sigma$ is actually subspace of $(\calA^0-\calA^0_{\infty})^\sigma$.   By taking $\sigma$-fixed points, the $\modeFD$-equivariant deformation 
retraction from \cref{Theo:Retraction} restricts to $\calY^{\sigma}$ and combined with \cref{restriction:harer:map} gives the following result.

\begin{corollary}\label{restriction:Retraction} 
The complex  $\mathcal{Y}^{\sigma}$ is a $\modeFD$-equivariant deformation retract of $(\mathcal{A}^0)^{\sigma}$. This retraction gives a  $C_{\Mod^{\pm}(F_0,\Delta)}(\sigma)$-equivariant homotopy equivalence between $(\mathcal{A}^0 - \mathcal{A}_\infty^0)^{\sigma}$ and $\mathcal{Y}^{\sigma}$. Furthermore, $\varphi^{-1}(\mathcal{Y}^{\sigma})$ is a $\Mod(N_{g+1};\pi(\Delta))$-equivariant deformation retraction of $\T(N_g^n;\pi(\Delta))$.
\end{corollary}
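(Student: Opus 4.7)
The plan is to derive all three assertions by specializing Harer's $\modeFD$-equivariant deformation retraction from \cref{Theo:Retraction} to $\sigma$-fixed points, and then transporting the result to the non-orientable setting through the homeomorphism $\varphi$ of \cref{restriction:harer:map}.

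First, let $H\colon \calA^0 \times [0,1] \to \calA^0$ be the $\modeFD$-equivariant strong deformation retraction of $\calA^0$ onto $\calY$ granted by \cref{Theo:Retraction}. Since $\sigma$ represents a mapping class in $\modeFD$, equivariance gives $\sigma\cdot H(x,t)=H(\sigma\cdot x,t)$ for every $(x,t)\in \calA^0\times[0,1]$; in particular $H$ sends $(\calA^0)^\sigma\times[0,1]$ into $(\calA^0)^\sigma$. Thus the restriction of $H$ is a strong deformation retraction of $(\calA^0)^\sigma$ onto $\calY\cap(\calA^0)^\sigma=\calY^\sigma$, equivariant with respect to the subgroup of $\modeFD$ that preserves $(\calA^0)^\sigma$, namely $C_{\modeFD}(\sigma)$. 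By the identification of \cref{Embed:Teich:Non:Orient}, this retraction is in particular $\Mod(N_0,\pi(\Delta))$-equivariant.

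Second, since $\calY\subset \calA^0-\calA_\infty^0$ and $\sigma$ preserves each of $\calA^0$, $\calA_\infty^0$ and $\calY$, restricting the homotopy equivalence between $\calA^0-\calA_\infty^0$ and $\calY$ (the second conclusion of \cref{Theo:Retraction}) to $\sigma$-fixed points produces the asserted $C_{\Mod^{\pm}(F_0,\Delta)}(\sigma)$-equivariant homotopy equivalence between $(\calA^0-\calA_\infty^0)^\sigma$ and $\calY^\sigma$. For the final statement, transporting this deformation retraction through the $\Mod(N_0,\pi(\Delta))$-equivariant homeomorphism $\varphi\colon \T(N_{g+1}^n;\pi(\Delta))\to (\calA^0-\calA_\infty^0)^\sigma$ of \cref{restriction:harer:map} yields the desired equivariant deformation retraction of $\T(N_{g+1}^n;\pi(\Delta))$ onto $\varphi^{-1}(\calY^\sigma)$.

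The only genuine verification required is that Harer's deformation retraction is $\sigma$-equivariant, rather than merely $\PMod(F_g^s)$-equivariant as originally stated in \cite{Ha86}. This is exactly what the remark following \cref{Theo:Retraction} addresses: Harer's construction proceeds inductively by collapsing, at stage $k$, the open stars of the vertices of $\calA_\infty^0$ of weight $k-1$; these sets of vertices are preserved by the whole action of $\modeFD$, hence a fortiori by $\sigma$, so each stage of the collapse is $\sigma$-invariant. With this observation in hand, the three statements follow formally from passage to $\sigma$-fixed points and from transport along $\varphi$, and no further technical obstacle arises.
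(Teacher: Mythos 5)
Your proposal is correct and follows essentially the same route as the paper: both rest on the observation (made in the remark after \cref{Theo:Retraction}) that Harer's stagewise collapse of open stars is $\modeFD$-equivariant and hence $\sigma$-equivariant, so the retraction restricts to $\sigma$-fixed points, and the non-orientable statement is then obtained by transport along the homeomorphism $\varphi$ of \cref{restriction:harer:map}. Your write-up is in fact slightly more careful than the paper's one-line derivation, e.g.\ in noting that the restricted retraction is equivariant for $C_{\modeFD}(\sigma)$ rather than all of $\modeFD$.
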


\begin{definition}[A {spine} for  $\T(N_{g+1}^n;{\pi(\Delta)})$]\label{spine:equi} 
We refer to both the complex $\calY^\sigma$ and the subspace $\calZ:=\varphi^{-1}(\calY^\sigma)$ of $\T(N_{g+1}^n;{\pi(\Delta)})$ as {\it Harer's spine for $\T(N_{g+1}^n;{\pi(\Delta)})$}.  When we want to emphasize the parameters of the non-orientable surface we use the notation $\calZ_{g+1}^{n,\ell}$.
\end{definition}



\section{The dimension of the spine} \label{Sec:Spine}

The goal of this section is to prove \cref{thm:main} which gives the dimension of the spine $\mathcal{Z}$, from \cref{spine:equi},  for the decorated Teichmüller space  of a punctured non-orientable surface. Since this spine is given by $\calY^{\sigma}$, the $\sigma$-fixed points of Harer's spine $\calY=\calY_{g}^{s,m}$,  we first study (in \cref{SigmaInvariant}) arc systems that are fixed by the action of $\sigma$.

\begin{definition}
    Let $ \asA$ be an arc system in $F_0$. We say that $\asA$ is \textit{$\sigma$-invariant arc system} if and only if $\sigma\cdot \asA = \asA$.  
\end{definition}

In order to obtain an upper bound for the dimension of any $k$-simplex in $\mathcal{Y}^\sigma$, we  establish upper and lower bounds for the rank of any $\sigma$-invariant arc system that fills up $F_0$;  see \cref{SigmaInvariantMaximal} and \cref{lem:lower:bound:MAS}.  To show equality, we construct an explicit $k$-simplex in $\mathcal{Y}^\sigma$ that attains the upper bound. We do this  in \cref{Sec:dim} and obtain \cref{thm:main}. 



\subsection{\texorpdfstring{Properties of $\sigma$-invariant arc systems}{Properties of sigma-invariant arc systems}}\label{SigmaInvariant}

Consider the complexes $(\mathcal{A}^0)^{\sigma}$ and $\calY^\sigma$ appearing in \cref{restriction:Retraction}. First notice that $(\mathcal{A}^0)^{\sigma}$ is the subcomplex of $\mathcal{A}^0$ spanned by $\sigma$-invariant arc systems, and  $\calY^\sigma$ is precisely the subcomplex  of $(\calA^0)^\sigma$ spanned by the  $\sigma$-invariant arc systems that fill up $F_0$. More precisely,  the vertices of $\calY^\sigma$ consist of the set of $\sigma$-invariant arc systems that fill up $F_0$ and a $k$-simplex of $\calY^\sigma$ is given by a chain of arc systems
$\asA_0 \subsetneq \cdots \subsetneq \asA_k$,
where each $\asA_i$ is a $\sigma$-invariant arc system that fills up $F_0$.

The following explicit $\sigma$-invariant arc system will be useful for our constructions below.

\begin{example}[$\sigma$-invariant arc system with $2g+2$ arcs] \label{ArSy:SigInv:Canonical}

Consider the model surface from \cref{fig:Model:Surface}. 
We begin by constructing a $\sigma$-invariant arc system for a surface of even genus.

\begin{figure}[!ht]
    \centering
    \begin{subfigure}[b]{0.70\textwidth}
        \centering
        \includegraphics[width=\textwidth]{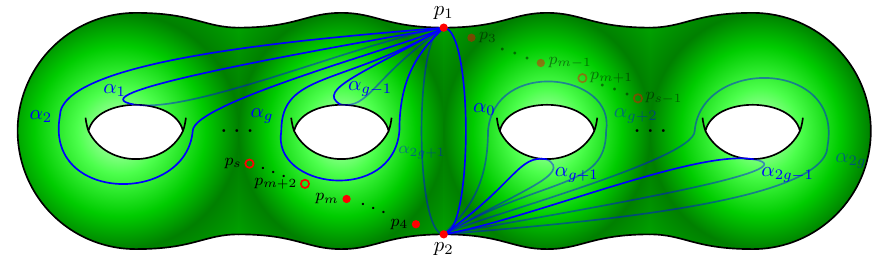} 
        \caption{$\sigma$-arc system of a surface of even genus}
        \label{subfig:MaxASEv:A}
    \end{subfigure}
    \hfill
    \begin{subfigure}[b]{0.85\textwidth}
        \centering
        \includegraphics[width=\textwidth]{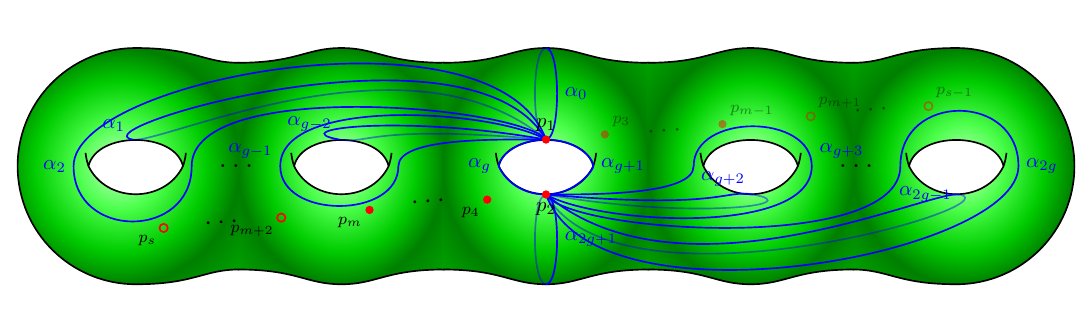} 
        \caption{$\sigma$-arc system of a surface of odd genus}
        \label{subfig:MaxASOd:A}
    \end{subfigure}
    \caption{A $\sigma$-invariant arc system $\asB$ with $2g+2$ arcs}
    \label{fig:MaxAS:A}
\end{figure}
Define two arcs, $\alpha_0$ and $\alpha_{2g+1} := \sigma(\alpha_0)$, which join the points $p_1$ and $p_2 = \sigma(p_1)$ in such a way that the surface splits into two regions, $F_1$ and $F_2$, with $\sigma(F_1) = F_2$. In $F_1$, select the loops $\alpha_1, \ldots, \alpha_g$ 
with base point $p_1$, each passing through one handle of the surface. Then, add the images {$\alpha_{g+1}:=\sigma(\alpha_{g-1}),\alpha_{g+2} := \sigma(\alpha_g), \ldots, \alpha_{2g-1} := \sigma(\alpha_1), \alpha_{2g} := \sigma(\alpha_2)$}. This set of arcs forms the arc system $\asB = [\alpha_0, \alpha_1, \ldots, \alpha_{2g+1} ]$, as illustrated in \cref{fig:MaxAS:A}. By construction, the arc system $\asB$ is $\sigma$-invariant. Cutting the surface $F_0$ along $\asB$, we obtain two polygons each with $2g+2$ sides with marked points and punctures in their interiors as show in \cref{fig:Pol:Sig:Arc:System}. 

An analogous $\sigma$-invariant arc system can be constructed for a surface of odd genus, as shown in \cref{subfig:MaxASOd:A}, yielding a similar representation of $F_0$ upon cutting along this arc system.

\begin{figure}[!ht]
    \centering
    \includegraphics[width=0.9\linewidth]{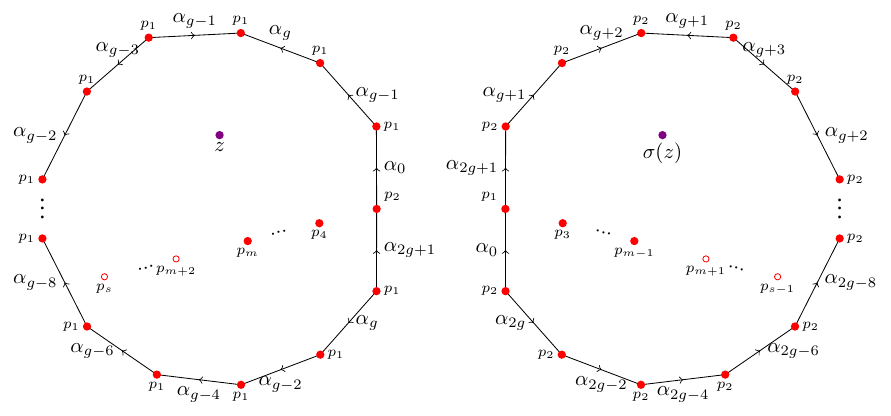}
    \caption{Polygons obtained by cutting $F_0$ along the arc system $\asB$.
    }
    \label{fig:Pol:Sig:Arc:System}
\end{figure}

\end{example}

\begin{lemma}\label{sigma:arcs:even:number}
Any $\sigma$-invariant arc system in $F_0$ has an even number of $\Delta$-arcs.
\end{lemma}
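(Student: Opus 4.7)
The plan is to show that $\sigma$ acts on the set of isotopy classes of arcs of any $\sigma$-invariant system with no fixed classes, so these arcs partition into $\sigma$-orbits of size two, and their total count is therefore even.

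Concretely, given $\asA=[\alpha_0,\ldots,\alpha_k]$ with $\sigma\cdot\asA=\asA$, the action of $\sigma$ induces an involution of the finite set $\{[\alpha_0],\ldots,[\alpha_k]\}$. Writing $r$ for the number of isotopy classes fixed by this involution and $t$ for the number of two-cycles, the number of arcs in $\asA$ equals $r+2t$. It therefore suffices to prove $r=0$.

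Suppose for contradiction that $\sigma([\alpha])=[\alpha]$ for some $[\alpha]\in\asA$. Since isotopies rel $\Delta$ preserve endpoints, $\sigma$ must permute the endpoint set $\{a,b\}$ of $\alpha$; as $\sigma$ has no fixed points on $\Delta$ (it swaps $p_{2i-1}\leftrightarrow p_{2i}$), it must swap $a$ and $b$, forcing $\{a,b\}=\{p_{2i-1},p_{2i}\}$ for some $i$. I would then fix a $\sigma$-invariant complete hyperbolic metric of finite area on $F_g^s$, obtained by lifting a hyperbolic structure on $N_{g+1}^n$, and replace $\alpha$ by the unique geodesic representative of its isotopy class rel endpoints. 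Since $\sigma(\alpha)$ is another geodesic in the same isotopy class, uniqueness forces $\sigma(\alpha)=\alpha$ as a subset of $F_g^s$. The restriction $\sigma\vert_\alpha\colon\alpha\to\alpha$ is then a continuous involution interchanging the two endpoints of $\alpha$, so it has a fixed point in the interior of $\alpha$, contradicting the freeness of $\sigma$ on $F_g$ (which holds because $\sigma$ is the antipodal map of $\mathbb{R}^3$ restricted to a surface not containing the origin, so that $F_g\to N_{g+1}$ is an honest double cover).

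The main obstacle is the step that upgrades the formal identity $\sigma([\alpha])=[\alpha]$ to a genuinely $\sigma$-invariant representative; the hyperbolic geodesic argument handles this cleanly, but an alternative route is an equivariant isotopy argument using that $\sigma$ generates a free $\mathbb{Z}/2$-action on $F_g^s$. Once that reduction is secured, the freeness of $\sigma$ forbids any $\sigma$-fixed arc, and the parity conclusion follows immediately.
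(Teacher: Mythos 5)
Your proof is correct, and at the top level it follows the same strategy as the paper: both arguments reduce the parity claim to showing that $\sigma$ acts freely on the isotopy classes of arcs in a $\sigma$-invariant system, so that the arcs pair off into orbits $\{[\alpha],\sigma([\alpha])\}$ of size two (the paper packages this pairing as a recursive filtration $\asA_1\subsetneq\cdots\subsetneq\asA_k=\asA$ with $|\asA_i|=2i$, which is just your orbit decomposition built one orbit at a time). Where you genuinely differ is in how the key claim $\sigma([\alpha])\neq[\alpha]$ is established. The paper argues purely by endpoint bookkeeping: for loops the base points $p$ and $\sigma(p)$ differ, and for arcs it asserts that the endpoints are "swapped, hence not homotopic rel $\Delta$." That disposes of loops and of arcs whose endpoint pair is not $\sigma$-invariant, but for an arc joining $p_{2i-1}$ to $p_{2i}=\sigma(p_{2i-1})$ the endpoint \emph{set} is preserved by $\sigma$, so endpoint considerations alone do not rule out $\sigma(\alpha)\simeq\alpha$ unless one reads the arcs as oriented. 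Your geodesic argument closes exactly this case: a $\sigma$-invariant complete hyperbolic metric (lifted from $N_{g+1}^n$, available since $g+n>1$) forces a $\sigma$-fixed class to have a genuinely $\sigma$-invariant geodesic representative, and the induced end-swapping involution of that arc would have an interior fixed point, contradicting the freeness of the deck transformation. So your proof buys a complete treatment of the one delicate case at the cost of importing hyperbolic geometry (or, as you note, an equivariant isotopy argument); just be slightly careful that the arcs live in $F_0$ with endpoints at marked points rather than punctures, so one should invoke the standard correspondence with ideal arcs in $F_g^s$ before taking geodesic representatives.
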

\begin{proof}
Let $\asA$ be a $\sigma$-invariant arc system in $F_0$. We claim that there is a filtration  of $\asA$
\[ 
    \asA_1\subsetneq \asA_2\subsetneq\cdots \subsetneq \asA_k =\asA,
\]
where each $\asA_i$ is a $\sigma$-invariant arc system in $F_0$ and $\vert \asA_i\vert=2i$ for all $i$.
We construct the filtration recursively. Let $\alpha_{s_1}\in \asA$, we define $\asA_1=\langle \alpha_{s_1}, \sigma(\alpha_{s_1})\rangle$, note that $\asA_1\subset \asA$ and $\vert \asA_1\vert=2$ this is because for all $\alpha\in \asA$ we have that $\sigma(\alpha)\in \asA$ and  they are not homotopic relative to $\Delta$. Indeed, if $\alpha_{s_1}$ is a loop, then by the construction of $F_0$, the initial point of $\alpha_{s_1}$ and $\sigma(\alpha_{s_1})$ are distinct, and the claim follows. Now, if $\alpha_{s_1}$ connects two distinct points of $\Delta$, then by the construction of $F_0$, the endpoints of $\alpha_{s_1}$ and $\sigma(\alpha_{s_1})$ are swapped, and hence they are not homotopic relative to $\Delta$.

We define $\asA_i$ from $\asA_{i-1}$ as follows: let $\alpha_{s_i}\in \asA-\asA_{i-1}$ then we define $\asA_i=\asA_{i-1}\cup \langle \alpha_{s_i}, \sigma(\alpha_{s_i}) \rangle$. Note that $\sigma(\alpha_{s_i})\neq \alpha_{k}$ for all $k\leq i$ and $\sigma(\alpha_{s_i})\neq \sigma(\alpha_k)$ for all $k<i$. In fact, if $\sigma(\alpha_{s_i})=\alpha_k$ for some $k<i$, then $\alpha_{s_i}=\sigma(\alpha_k)$ in particular $\alpha_{s_i}\in \asA_{i-1}$ and $\alpha_{s_i}\notin\asA- \asA_{i-1}$ and this contradicts our choice of $\alpha_{s_i}$. A similar argument gives us the other conclusion. Therefore it follows that $\vert \asA_i\vert=2i$ for all $i$.

Since $\vert \asA\vert<\infty$ we have that $\asA=\asA_{i}$ for some $i$, moreover by construction $\vert \asA_i\vert=2i$. In particular, $\asA$ has an even number of $\Delta$-arcs.
\end{proof}

\begin{lemma}\label{SigmaInvariantMaximal} There exist a $\sigma$-invariant arc system  $\asB_{max}$ of  maximal rank $6g-7+2s+m$.
\end{lemma}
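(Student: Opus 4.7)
The plan is to construct $\asB_{max}$ explicitly by extending the $\sigma$-invariant arc system $\asB$ from \cref{ArSy:SigInv:Canonical}. Cutting $F_0$ along $\asB$ yields two disk-regions $F_1$ and $F_2$ with $\sigma(F_1) = F_2$, and by the $\sigma$-equivariance of the sets $\Delta$ and $P$, the interior of $F_1$ contains $\ell-1$ marked points from $\Delta$ and $n-\ell$ punctures from $P$, and analogously for $F_2$. So I would extend $\asB$ by triangulating $F_1$ and propagating the result to $F_2$ using $\sigma$.

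First, I would triangulate $F_1$ into 3-gons, ignoring the punctures for now. This can be done inductively: if $\ell \geq 2$, pick an interior marked point and join it by arcs to every boundary corner, then recurse; if $\ell=1$, this step is vacuous. A count via Euler's formula for a polygon with $2g+2$ boundary corners and $\ell-1$ interior vertices shows that this uses $2g + 3\ell - 4$ new arcs and produces $2g + 2\ell - 2$ triangles. Next, for each of the $n-\ell$ punctures $q$ in $F_1$, I would add a loop based at a corner of the triangle containing $q$ that encircles $q$ (creating a once-punctured monogon around $q$) and a single further arc subdividing the remaining 4-gon into two 3-gons; this contributes $2(n-\ell)$ more arcs. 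Finally, I apply $\sigma$ to each arc added inside $F_1$ to obtain matching arcs in $F_2$, and define $\asB_{max}$ as the union of $\asB$ with all these new arcs.

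The system $\asB_{max}$ is $\sigma$-invariant by construction. To see that its rank is maximal, I would count
\[
|\asB_{max}| = (2g+2) + 2\bigl[(2g+3\ell-4) + 2(n-\ell)\bigr] = 6g - 6 + 4n + 2\ell = 6g - 6 + 2s + m,
\]
so the rank equals $6g - 7 + 2s + m$, which matches the dimension of $\calA(\Delta)$ and is therefore the maximum possible. The main obstacle will be verifying that the triangulation step actually works in the degenerate cases $\ell = 1$ (no interior marked points in $F_1$, so the initial triangulation is vacuous and everything comes from the monogon loops) and especially $g = 0$ (so that $F_1$ is a bigon with only two distinct boundary corners $p_1, p_2$). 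In these cases some of the 3-gons produced will have two of their corners identified as the same marked point of $F_0$, and I would need to check that such 3-gons still satisfy admissibility condition (2); this reduces to noting that the doubling condition on each complementary region $B$ is controlled by the polygonal boundary structure of $B$ itself, rather than by how many distinct images its corners have in $F_0$.
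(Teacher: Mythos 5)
Your proposal is correct and follows essentially the same route as the paper: both start from the $\sigma$-invariant system $\asB$ of \cref{ArSy:SigInv:Canonical}, equivariantly triangulate the two complementary $(2g+2)$-gons (adding three arcs per interior marked point and two per puncture), and arrive at the same count $6g+2s+m-6$ of arcs, hence rank $6g-7+2s+m$. The paper simply cites \cite[Lemma 2.6, Proposition 2.5]{CJRLASS} for maximality rather than re-deriving it, and does not dwell on the degenerate cases you flag, which are indeed harmless since condition (2) only depends on the polygonal type of each complementary region.
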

\begin{proof}
According to \cite[Lemma 2.6]{CJRLASS}, the rank of any maximal arc system in $\mathcal{A}(\Delta)$ is $6g - 7 + 2s + m$.  Recall that an arc system is maximal if and only if it splits $F_0$ into triangles and once-punctured monogons, see \cite[Proposition 2.5]{CJRLASS}. Consider the $\sigma$-invariant arc system $\asB$ with $2g+2$ arcs described in \cref{ArSy:SigInv:Canonical} and the representation in \cref{fig:Pol:Sig:Arc:System}. By adding arcs to $\asB$,  we obtain a $\sigma$-invariant arc system $\asB_{max}$ of maximal rank. First, add the $2g-1$ diagonals in the left polygon and its $2g-1$ images under $\sigma$ of these arcs in the second polygon. Then, add three arcs for each of the marked point $p_3,\ldots, p_m$ yielding  $3(m-2)$ arcs. Finally, for each puncture in $F_0$ we add two arcs, so we get $2(s-m)$ additional arcs. We illustrate this construction for a surface of even genus in \cref{fig:MaxAS:B}. Notice that the total number of $\Delta$-arcs in $\asB_{max}$  is equal to 
\[
    (2g+2)+2(2g-1)+3(m-2)+2(s-m)=6g+2s+m-6=2(3g+2n+\ell-3).
\]
Therefore, by construction $\asB_{max}$ is a $\sigma$-invariant arc system of rank $6g+2s+m-7$. 
\begin{figure}[!ht]
    \centering
    \includegraphics[width=0.9\linewidth]{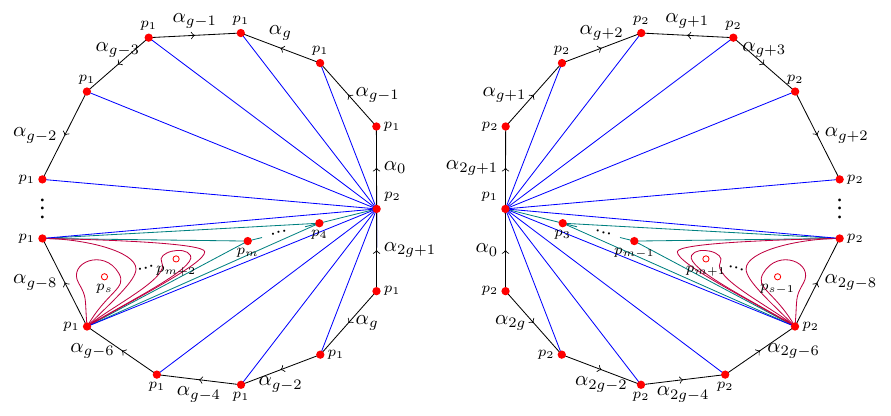} 
    \caption{A $\sigma$-invariant arc system $\asB_{max}$ of maximal rank for a surface of even genus}
    \label{fig:MaxAS:B}
\end{figure}
\end{proof}

Notice that the $\sigma$-invariant arc system $\asB_{max}$ defines a  chain of  maximal length
     \[
         \asB_0 \subsetneq \asB_1 \subsetneq \cdots \subsetneq \asB_{3g+2n+\ell-4} =\asB_{max},
     \]
 where $\asB_i= \{\alpha_0,\alpha_1, \ldots \alpha_{2i+1} \}$ is a $\sigma$-invariant arc system of rank $2i+1$, for each $i$. Furthermore, since the arc system $\asB_{max}$ fills up the surface $F_0$, this chain defines a simplex of maximal dimension in both $ (\calA^0)^\sigma$  and  $ (\calA^0-\calA^0_{\infty})^\sigma$.

\begin{corollary}\label{sigma:arc:simplex:dim} Both the simplicial complex $(\mathcal{A}^0)^\sigma$ spanned by $\sigma$-invariant arc systems and the ideal triangulation $(\mathcal{A}^0-\mathcal{A}^0_{\infty})^\sigma$ of $\T(
N_{g+1}^n;\pi(\Delta))$ have dimension $3g+2n+\ell-4$. 
\end{corollary}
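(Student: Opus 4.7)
The plan is to prove both dimension equalities by establishing matching upper and lower bounds equal to $3g+2n+\ell-4$. The starting observation is that, since passing to the barycentric subdivision removes inversions, a $k$-simplex $\asA_0 \subsetneq \cdots \subsetneq \asA_k$ of $\mathcal{A}^0$ lies in $(\mathcal{A}^0)^{\sigma}$ exactly when each vertex $\asA_i$ is a $\sigma$-invariant arc system; and its open simplex lies in $(\mathcal{A}^0-\mathcal{A}^0_\infty)^{\sigma}$ exactly when, in addition, the top vertex $\asA_k$ fills up $F_0$ (so that the open simplex is not contained in the subcomplex $\mathcal{A}^0_\infty$). Hence the two dimensions equal the maximal length $k$ of a strictly increasing chain of $\sigma$-invariant arc systems, respectively of such a chain whose top term fills up $F_0$.

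For the upper bound, I would combine \cref{sigma:arcs:even:number} with the strict inclusions to get $|\asA_{i+1}| \geq |\asA_i|+2$ at each step (both cardinalities are even), so that iteration yields $|\asA_k| \geq |\asA_0| + 2k \geq 2(k+1)$. On the other hand, by \cite[Lemma 2.6]{CJRLASS} the maximum number of arcs in any arc system on $F_0$ equals $6g-6+2s+m = 2(3g+2n+\ell-3)$ (the maximal rank plus one). Combining these inequalities forces $k+1 \leq 3g+2n+\ell-3$, that is, $k \leq 3g+2n+\ell-4$, which is the desired upper bound in both cases.

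For the lower bound, I would invoke the explicit chain
\[
\asB_0 \subsetneq \asB_1 \subsetneq \cdots \subsetneq \asB_{3g+2n+\ell-4} = \asB_{max}
\]
from the paragraph following \cref{SigmaInvariantMaximal}, whose terms are $\sigma$-invariant arc systems with $|\asB_i| = 2(i+1)$. This is a $(3g+2n+\ell-4)$-simplex of $(\mathcal{A}^0)^{\sigma}$, and since its top vertex $\asB_{max}$ fills up $F_0$, the same simplex also lies in $(\mathcal{A}^0 - \mathcal{A}^0_\infty)^{\sigma}$. This yields the matching lower bound for both complexes simultaneously, completing the proof.

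The main obstacle I anticipate is not in the combinatorics but in the topological identification underpinning the first paragraph: one must argue rigorously that a point in the interior of a simplex of $\mathcal{A}^0$ is $\sigma$-fixed if and only if $\sigma$ fixes that simplex pointwise. This reduces to the no-inversions property of the $\sigma$-action on $\mathcal{A}^0$ (already invoked earlier in the paper when passing to the barycentric subdivision): $\sigma$ permutes simplices, so it fixes the simplex setwise, and absence of inversions then forces it to fix each vertex. Once this identification is in place, the remaining inequalities are routine bookkeeping on cardinalities of arc systems.
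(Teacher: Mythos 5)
Your proposal is correct and follows essentially the same route as the paper: the lower bound comes from the explicit chain $\asB_0\subsetneq\cdots\subsetneq\asB_{3g+2n+\ell-4}=\asB_{max}$ built from \cref{SigmaInvariantMaximal}, and the upper bound from the parity constraint of \cref{sigma:arcs:even:number} together with the maximal number $6g-6+2s+m$ of arcs in any arc system. The paper states the chain is ``of maximal length'' without spelling out the cardinality bookkeeping or the identification of $(\mathcal{A}^0)^\sigma$ with the subcomplex spanned by $\sigma$-invariant arc systems; you make both explicit, which is the intended argument rather than a different one.
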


\begin{lemma} \label{lem:lower:bound:MAS}
    Let $\asA$ be a $\sigma$-invariant arc system that fills up $F_0$. Then
    \[
        \rank(\asA) \geqslant \begin{cases}
        2g + s - 3 & \text{if } m < s, \\
        2g + s - 1 & \text{if } m = s.
        \end{cases} 
    \]
    Furthermore, there is an explicit $\sigma$-invariant arc system $\asB_{min}$ that fills up $F_0$ and attains this lower bound. 
\end{lemma}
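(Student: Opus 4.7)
The strategy is to derive the lower bound for $\rank(\asA)$ from an Euler characteristic calculation combined with a parity argument stemming from the free $\sigma$-action, and then to exhibit an explicit $\sigma$-invariant filling arc system realizing this bound.

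I would first compute $\rank(\asA)$ in terms of the number $d$ of disk components of $F_0 - \asA$. Since $\asA$ fills up $F_0$, the complement consists of $d$ disks and $p = s - m$ once-punctured disks. Endow $F_g$ with the CW structure whose $0$-cells are $\Delta \cup P$, whose $1$-cells are the arcs of $\asA$ together with one auxiliary edge per once-punctured disk (from its puncture to a boundary $\Delta$-vertex, turning it into a genuine $2$-cell), and whose $2$-cells are the resulting $d + p$ disks. Then
\[
\chi(F_g) = 2 - 2g = s - (|\asA| + p) + (d + p) = s - |\asA| + d,
\]
so $|\asA| = 2g + s - 2 + d$ and $\rank(\asA) = 2g + s - 3 + d$.

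Next I would show that $d$ is even. The involution $\sigma$ permutes the components of $F_0 - \asA$, and since it acts freely on $F_g$, it restricts to a free action on any $\sigma$-invariant component. If $\sigma$ fixed some disk component $C$ setwise, the quotient $C/\sigma$ would be a surface with $\chi(C/\sigma) = \chi(C)/2 = 1/2$, which is impossible; hence $\sigma$ permutes the disk components freely in pairs and $d$ is even. When $m < s$, this gives $\rank(\asA) \geq 2g + s - 3$; when $m = s$, one has $p = 0$, so $d \geq 1$, and parity forces $d \geq 2$, yielding $\rank(\asA) \geq 2g + s - 1$.

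For the construction of $\asB_{min}$, I would start from the $\sigma$-invariant system $\asB$ of \cref{ArSy:SigInv:Canonical}, which decomposes $F_0$ into two $(2g+2)$-gons swapped by $\sigma$, each containing $\ell - 1$ marked points and $n - \ell$ punctures in its interior. In the case $m = s$, add to each polygon $\ell - 1$ arcs, each joining an interior marked point to a boundary vertex, with arcs in the second polygon chosen as the $\sigma$-images of those in the first; these non-separating additions keep the polygons as single disks, giving $d = 2$ and $\rank = 2g + s - 1$. In the case $m < s$, add to each polygon $\ell - 1$ such marker arcs (covering the interior marked points) together with $n - \ell - 1$ chord arcs between boundary vertices (subdividing the polygon into $n - \ell$ once-punctured subregions), for a total of $n - 2$ added arcs per polygon; choosing the chords so that each subregion contains exactly one puncture gives $d = 0$ and $\rank = 2g + s - 3$. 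The main obstacle is the explicit construction: one must check at each step that the added arcs preserve the arc-system conditions (disk components with at least three boundary arcs, once-punctured disks with at least one) and that the final decomposition yields the claimed counts of $d$ and $p$; in particular, the low-genus case $g = 0$, where $\asB$ already cuts $F_0$ into bigons rather than valid polygons, requires a modified starting configuration.
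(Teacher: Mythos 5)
Your proof is correct, but the route to the inequality is genuinely different from the paper's. For the lower bound, the paper simply quotes the bound $2g+s-3$ (resp.\ $2g+s-2$) from the proof of \cite[Theorem 1.1]{CJRLASS} and then upgrades the $m=s$ case by one using \cref{sigma:arcs:even:number} (a $\sigma$-invariant system has an even number of arcs, so its rank is odd, and $2g+s-2$ is even). You instead re-derive the identity $\rank(\asA)=2g+s-3+d$ from an Euler characteristic count and obtain the parity from the fact that the free involution $\sigma$ cannot preserve a disk component (as $\chi$ would have to halve). The two parity statements --- $|\asA|$ even versus $d$ even --- are equivalent precisely via your Euler characteristic identity, but the arguments are independent: the paper's \cref{sigma:arcs:even:number} pairs up arcs using that $\sigma$ swaps $p_{2i-1}$ and $p_{2i}$, while yours pairs up complementary disks using freeness of the covering involution. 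Your version is self-contained (no appeal to the orientable-case computation) and makes transparent why the jump in the $m=s$ case is from $d\geq 1$ to $d\geq 2$; the paper's is shorter given the cited result. The construction of $\asB_{min}$ is essentially the paper's: same starting system $\asB$, same arc counts, with the only cosmetic difference that you subdivide each polygon into once-punctured regions by chords where the paper encircles all but two punctures by loops. Your worry about $g=0$ is unfounded: under the standing hypothesis $g+n>1$ the two bigons produced by $\asB$ always contain interior punctures or marked points, so condition (2) on arc systems is satisfied and no modified starting configuration is needed.
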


\begin{proof}
    Let $\asA$ be a $\sigma$-invariant arc system that fills up $F_0$. In the proof of \cite[Theorem 1.1]{CJRLASS}, the authors show that the rank of an  arc system $\asA$ that fills up $F_0$ is bounded below by
\[
    \rank(\asA) \geq \begin{cases}
    2g + s - 3 & \text{if } m < s, \\
    2g + s - 2 & \text{if } m = s.
    \end{cases} 
\]
By \cref{sigma:arcs:even:number} we know that $\rank(\asA)$ must be odd. Hence, $\rank(\asA)\geq 2g + s - 1$ when $m=s$, as claimed. 

Now we construct a $\sigma$-invariant arc system that fills $F_0$ that attains the lower bound. We do it explicitly for a surface of even genus, the construction is similar for a surface of odd genus. 
\begin{figure}
    \centering
    \includegraphics[width=0.9\linewidth]{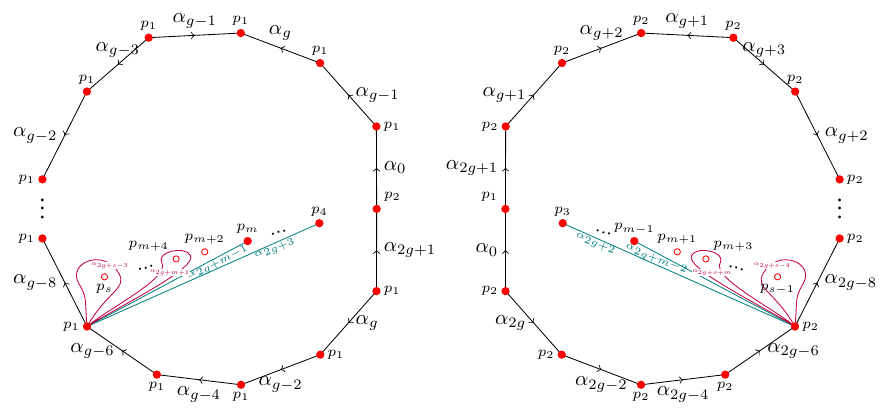}

    \caption{A minimal $\sigma$-invariant arc system $\asB_{min}$ that fills up $F_0$ of even genus}
    \label{fig:MinAS:Ev}
\end{figure}

Consider the $\sigma$-invariant arc system $\asB=[\alpha_0, \ldots, \alpha_{2g+1}]$ constructed in \cref{ArSy:SigInv:Canonical}.  Splitting the surface along this arc system yields two polygons as in \cref{fig:MaxAS:A}, each with $2g+2$ sides, containing marked points and punctures in their interiors. We add $m-2$ green arcs that connect each of the marked points in the interior of the polygons as indicated in \cref{fig:MinAS:Ev}, obtaining an arc system with $(2g+2)+(m-2)$ arcs. If $m=s$, then $F_0=F_g$ and we take $\asB_{min}$ to be this arc system. If $m<s$, we add $(s-m)-2$ purple loops that around each of the punctures of $F_0$ except two (one puncture in each polygon). We take $\asB_{min}$ as this arc system  with $(2g+2)+(m-2)+(s-m)-2$ arcs.  By construction, $\asB_{min}$ is $\sigma$-invariant and  fills up $F_0$.  It has rank equal to $ 2g+s-3$ if $m<s$ and rank equal to $2g+s-1$ if $m=s$.
\end{proof}



\subsection{Computation of the dimension}\label{Sec:dim}

The dimension of Harer's spine $\calY$ was explicitly computed in \cite[Theorem 1.1]{CJRLASS}. Here, we follow a similar strategy and use the results in \cref{SigmaInvariant} to compute the dimension of $\calY^\sigma$.

\begin{theorem}[Dimension of the spine $\mathcal{Z}$ for $\T(N_{g+1}^n;{\pi(\Delta)})$]\label{thm:main}
Let $1\leq \ell\leq n$ and take $g+n>1$.  The dimension of the spine $\calZ(=\calZ_{g+1}^{n,\ell})$  is given by
\[
    \dim(\calZ) = \begin{cases}
    2g + n+\ell- 2  & \text{if } \ell< n, \\
    2g + 2n - 3 & \text{if } \ell = n.
    \end{cases} 
\]  
\end{theorem}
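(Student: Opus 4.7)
The plan is to determine $\dim(\calZ) = \dim(\calY^\sigma)$ by computing the maximal length of a chain of $\sigma$-invariant arc systems that fill up $F_0$. By definition, a $k$-simplex of $\calY^\sigma$ is exactly such a chain $\asA_0 \subsetneq \asA_1 \subsetneq \cdots \subsetneq \asA_k$, so I will derive matching upper and lower bounds on the maximum possible $k$ using the numerical constraints established in \cref{SigmaInvariantMaximal} and \cref{lem:lower:bound:MAS}.

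For the upper bound, I would first observe that if $\asA_{i-1}\subsetneq \asA_i$ are both $\sigma$-invariant, then their difference $\asA_i\setminus \asA_{i-1}$ is also $\sigma$-invariant, and since by \cref{sigma:arcs:even:number} both $|\asA_{i-1}|$ and $|\asA_i|$ are even, we must have $|\asA_i| - |\asA_{i-1}| \geq 2$. Telescoping yields $2k \leq |\asA_k| - |\asA_0|$. Now \cref{SigmaInvariantMaximal} provides the upper bound $|\asA_k| \leq 6g + 2s + m - 6 = 6g + 4n + 2\ell - 6$, while \cref{lem:lower:bound:MAS} applied to the filling arc system $\asA_0$ gives $|\asA_0| \geq 2g + 2n - 2$ when $\ell < n$ and $|\asA_0| \geq 2g + 2n$ when $\ell = n$. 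Substituting $s=2n$, $m=2\ell$ and simplifying yields $k \leq 2g + n + \ell - 2$ and $k \leq 2g + 2n - 3$ respectively, matching the two cases of the theorem.

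For the lower bound I would exhibit an explicit chain realising the claimed length. The strategy is to start with the minimal filling $\sigma$-invariant arc system $\asB_{min}$ from the proof of \cref{lem:lower:bound:MAS}, and iteratively refine it by adjoining pairs $\{\alpha,\sigma(\alpha)\}$ of $\sigma$-related arcs, terminating at a maximal $\sigma$-invariant arc system of rank $6g + 2s + m - 7$. At each stage, the current non-maximal arc system $\asA_i$ decomposes $F_0$ into polygons and once-punctured polygons, at least one of which fails to be a triangle or a once-punctured monogon; I then pick an arc $\alpha$ subdividing such a component and set $\asA_{i+1} = \asA_i \cup \{\alpha, \sigma(\alpha)\}$. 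Because the iteration starts from $\asB_{min}$, which is built symmetrically across the two halves $F_1, F_2$ of $F_0$ from \cref{ArSy:SigInv:Canonical}, one can systematically select $\alpha$ inside $F_1$ so that $\sigma(\alpha)\subset F_2$, automatically guaranteeing disjointness. The resulting arc system remains $\sigma$-invariant, still fills $F_0$, and has exactly two more arcs than its predecessor.

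The main obstacle I anticipate is verifying at every stage of the refinement that a suitable pair $\{\alpha,\sigma(\alpha)\}$ really exists, ruling out scenarios in which the only available subdividing arcs inside a non-triangular component are $\sigma$-invariant as subsets of $F_0$ or cross their own $\sigma$-images. Overcoming it requires a careful inspection of the polygonal pictures appearing in \cref{SigmaInvariantMaximal} and \cref{lem:lower:bound:MAS}: one checks that the additional diagonals, the extra arcs at marked points, and the loops around punctures needed to pass from $\asB_{min}$ to a maximal $\sigma$-invariant arc system can all be introduced one $\sigma$-pair at a time while keeping each $\alpha$ strictly inside one of the polygonal halves. Once this is confirmed, the constructed chain has length exactly $(|\asB_{max}| - |\asB_{min}|)/2$, which equals $2g+n+\ell-2$ when $\ell<n$ and $2g+2n-3$ when $\ell=n$, completing the computation.
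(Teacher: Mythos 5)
Your proposal is correct and follows essentially the same route as the paper: the upper bound via the parity constraint of \cref{sigma:arcs:even:number} together with the rank bounds of \cref{SigmaInvariantMaximal} and \cref{lem:lower:bound:MAS}, and the lower bound via an explicit chain from $\asB_{min}$ to $\asB_{max}$ obtained by adjoining $\sigma$-pairs of arcs, one arc in each polygonal half. The existence issue you flag for the intermediate pairs is handled in the paper exactly as you suggest, by inspection of the explicit polygonal decompositions.
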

\begin{proof} Let $m=2\ell$ and $s=2n$. Since $\calZ_{g+1}^{n,\ell}$ has the same dimension as $\calY^{\sigma}=(\calY_g^{s,m})^{\sigma}$, we work with the latter.
Recall that any $k$-dimensional simplex in $\calY^{\sigma}$ is determined by a chain 
$\asA_0 \subsetneq \asA_1 \subsetneq \dots \subsetneq \asA_k$, 
where each $\asA_i$ is a $\sigma$-invariant arc system that fills up $F_0$. By \cref{lem:lower:bound:MAS}, we have that 

 \[
        \rank(\asA_0) \geqslant \begin{cases}
        2g + s - 3 & \text{if } m < s, \\
        2g + s - 1 & \text{if } m = s.
        \end{cases} 
    \]
By  \cref{SigmaInvariantMaximal} it follows that $\rank(\asA_k) \leqslant 6g + 2s + m-7$. On the other hand,  by \cref{sigma:arcs:even:number}, each $\asA_i$ has an even number of arcs. Therefore,
\[
       k\leq \frac{1}{2}(\rank(\asA_k)-\rank(\asA_0))= \begin{cases}
        (4g+s+m-4)/2=2g+n+\ell-2 & \text{if } \ell < n, \\
        (4g+s+m-6)/2=2g+2n-3 & \text{if } \ell = n,
        \end{cases} 
    \]
which gives an upper bound for the dimension of $ (\calY)^{\sigma}$. 

To complete the proof, it suffices to construct an explicit $k$-simplex in $\calY^{\sigma}$ that realizes this upper bound. We exhibit this for a surface of even genus. The construction for a surface of odd genus is similar and we omit it.
   
Take $\asB_0$ to be $\asB_{min}$, the $\sigma$-invariant arc system that fills up $F_0$ constructed in \cref{lem:lower:bound:MAS} and illustrated in \cref{fig:MinAS:Ev}.  Then  $\asB_0$ has rank equal to $ 2g+s-3 $ if $m<s$ and equal to $2g+s-2$ if $m=s$.  To this arc system, we add a curve in the left polygon and its image under $\sigma$ in the right. We obtain a new $\sigma$-invariant arc system $\asB_1$ that fills up $F_0$. With the same procedure, by adding two curves at a time we obtain a chain
    \[
        \asB_{min}=\asB_0 \subsetneq \asB_1 \subsetneq \dots \subsetneq \asB_k=\asB_{max},
    \]
of $\sigma$-invariant arcs that fill up $F_0$.  We finish by obtaining $\asB_k$ as the maximal $\sigma$-invariant arc system $\asB_{max}$ from \cref{SigmaInvariantMaximal} illustrated in \cref{fig:MaxAS:B}, and $\rank(\asB_k)=6g+2s+m-7$. Thus $2k = \rank(\asB_k)-\rank(\asB_0)$, and the $k$-simplex formed by this chain has the desired dimension.
\end{proof}

In the case when $\ell=1$, we have that $|\pi(\Delta)|=1$,   and $\T(N_{g+1}^{n};\pi(\Delta))=\T(N_{g+1}^{n})$. 
Therefore, \cref{restriction:Retraction} and \cref{thm:main}  give the following result.

\begin{corollary}[A spine for Teichmüller space of a non-orientable surface with punctures] Let $n\geq 1$ and take $g+n>1$.
    There is a  $ \PMod(N_{g+1}^n)$-equivariant spine $\calZ(=\calZ_{g+1}^{n,1})$ for the Teichmüller space $\T(N_{g+1}^n)$    of dimension
    \[
    \dim(\calZ) = \begin{cases}
    2g+n-1  & \text{if } n>1, \\
    2g-1 & \text{if } n=1.
    \end{cases} 
\]
\end{corollary}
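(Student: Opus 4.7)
The plan is to derive this corollary as a direct specialization of \cref{thm:main} (and the existence of the spine from \cref{restriction:Retraction} together with \cref{spine:equi}) to the case $\ell = 1$. The first step is to reconcile the two notions of Teichmüller space involved. When $\ell = 1$, the set $\pi(\Delta)$ consists of a single point, so the open simplex ${\rm K}^{\ell-1} = {\rm K}^{0}$ of projective weights is a single point. Consequently, the decorated Teichmüller space $\T(N_{g+1}^{n};\pi(\Delta))$ is canonically (and $\PMod(N_{g+1}^n)$-equivariantly) homeomorphic to the ordinary Teichmüller space $\T(N_{g+1}^n)$, as already observed in the paragraph preceding the statement.

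Next, I would invoke the existence of the spine. By \cref{restriction:Retraction} together with \cref{spine:equi}, the subspace $\calZ_{g+1}^{n,1} = \varphi^{-1}(\calY^{\sigma})$ is a $\Mod(N_0,\pi(\Delta))$-equivariant deformation retract of $\T(N_{g+1}^n;\pi(\Delta))$. Since $\pi(\Delta)$ consists of a single point, every diffeomorphism of $N_0$ automatically preserves $\pi(\Delta)$ setwise, so $\Mod(N_0,\pi(\Delta))$ contains $\PMod(N_{g+1}^{n})$. Combining this with the homeomorphism from the first step produces a $\PMod(N_{g+1}^n)$-equivariant spine $\calZ$ for $\T(N_{g+1}^{n})$.

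Finally, the dimension is read off from \cref{thm:main} by plugging in $\ell = 1$. If $n > 1$, then $\ell = 1 < n$, so
\[
\dim(\calZ) \;=\; 2g + n + \ell - 2 \;=\; 2g + n - 1.
\]
If instead $n = 1$, then $\ell = n = 1$, so
\[
\dim(\calZ) \;=\; 2g + 2n - 3 \;=\; 2g - 1.
\]
These match the values claimed in the corollary. Since the statement is purely a specialization and the identification ``decorated with one mark equals undecorated'' is already in the paper, there is really no main obstacle here; the only thing one needs to be careful about is that the equivariance upgrades from $\Mod(N_0,\pi(\Delta))$ to the full $\PMod(N_{g+1}^{n})$, which is automatic precisely because a one-point set is setwise preserved by any self-diffeomorphism.
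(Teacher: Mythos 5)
Your proposal is correct and follows exactly the paper's own route: identify $\T(N_{g+1}^n;\pi(\Delta))$ with $\T(N_{g+1}^n)$ when $\ell=1$, invoke \cref{restriction:Retraction} for the existence of the equivariant spine, and substitute $\ell=1$ into \cref{thm:main} for the dimension. One small imprecision: it is not true that every diffeomorphism of $N_0$ preserves a one-point $\pi(\Delta)$ setwise (it could move the marked point); the containment $\PMod(N_{g+1}^{n})\leq \Mod(N_0,\pi(\Delta))$ holds for every $\ell$ simply because pure mapping classes fix each distinguished point, as noted in the paper's definition of $\mode(\Sigma_0,\Delta)$.
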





\section{Decorated Teichmüller spaces as classifying spaces for proper actions }\label{teic:model:proper:action} 
Given a discrete group $G$, a model for the {\it classifying space of $G$ for proper actions $\underbar{E}G$} is a $G$-CW-complex $X$  such that the fixed point set $X^H$ of a subgroup $H < G$ is contractible when $H$ is finite, and is empty otherwise. Such a model always exists and is unique up to $G$-homotopy.  Constructing explicit models for $\underline{E}G$ of `small' dimension is of particular interest, as they appear explicitly in the statement of the Baum-Connes conjecture and can be useful for computation; see for example \cite{LR05,luck2024isomorphism}.

The \emph{proper geometric dimension of $G$}, denoted by $\gdfin(G)$, is the minimum $n$ for which there exists an $n$-dimensional model for $\underline{E}G$. For any virtually torsion-free group $G$ it is well-known that $\vcd(G)\leq \gdfin(G)$. However, there are groups for which the inequality is strict, see for instance \cite{leary2003some,degrijse2017dimension,degrijse2017dimension}.

The Teichmüller space $\T(F_g^s)$ is known to be a model for the classifying space of $\Mod^{\pm}(F_g^s)$  for proper actions; see for instance \cite[Proposition 2.3]{MR2581835}.  We promote this statement to the non-orientable setting. Let $n\geq 0$ and $s=2n$.

\begin{proposition}
The Teichmüller space $\T(N_{g+1}^n)$ is a model of $\underline E\Mod(N_{g+1}^n)$ of dimension  $3g+2n-3$.
\end{proposition}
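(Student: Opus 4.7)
The plan is to transfer the known model property $\T(F_g^s)=\underline{E}\Mod^{\pm}(F_g^s)$ from \cite[Proposition 2.3]{MR2581835} along the $\sigma$-fixed-point identification $\T(N_{g+1}^n)\cong \T(F_g^s)^{\sigma}$ of \cref{prop:FixNonOrient}, exploiting that $\Theta$ embeds $\Mod(N_{g+1}^n)$ into the centralizer $C_{\Mod^{\pm}(F_g^s)}(\sigma)$ (\cref{Embed:Teich:Non:Orient}).

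The dimension statement is immediate from \cref{Teichmüller:orientable:nonorientable}: one has the diffeomorphism $\T(N_g^s)\cong \R^{3g-6+2s}$, so $\T(N_{g+1}^n)\cong \R^{3(g+1)-6+2n}=\R^{3g+2n-3}$. The required $\Mod(N_{g+1}^n)$-CW structure then exists because $\Mod(N_{g+1}^n)$ acts smoothly and properly on the smooth manifold $\T(N_{g+1}^n)$ (equivalently, on the smooth $\sigma$-fixed submanifold $\T(F_g^s)^{\sigma}\subset \T(F_g^s)$ via the restricted action of $C_{\Mod^{\pm}(F_g^s)}(\sigma)$), so Illman's equivariant triangulation theorem supplies the CW structure.

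It remains to verify the fixed-point conditions characterizing a model for $\underline{E}$. For any subgroup $H\leq \Mod(N_{g+1}^n)$, the equivariance of the homeomorphism in \cref{prop:FixNonOrient} yields
\[
\T(N_{g+1}^n)^{H}\;\cong\;\bigl(\T(F_g^s)^{\sigma}\bigr)^{\Theta(H)}\;=\;\T(F_g^s)^{\langle\sigma,\Theta(H)\rangle}.
\]
Because $\Theta(H)$ centralizes $\sigma$ and $\Theta$ is injective (for all $g$ when $n\geq 1$, and for $g\geq 2$ when $n=0$), the enlarged subgroup $\langle\sigma,\Theta(H)\rangle$ has order at most $2|H|$, so it is finite precisely when $H$ is finite. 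Invoking \cite[Proposition 2.3]{MR2581835} on the right-hand side delivers contractibility of $\T(N_{g+1}^n)^{H}$ when $H$ is finite, and emptiness otherwise, finishing the proof.

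The main subtlety I anticipate is administrative rather than conceptual: one must be careful that the equivariant triangulation on $\T(F_g^s)^{\sigma}$ is genuinely compatible with the action of $\Mod(N_{g+1}^n)$ (and not just with the larger centralizer) under the identification of \cref{prop:FixNonOrient}. Once this is secured, the proof is essentially formal, leaning on the orientable model property and the injectivity of $\Theta$.
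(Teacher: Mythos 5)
Your proposal is correct and follows essentially the same route as the paper: identify $\T(N_{g+1}^n)$ with $\T(F_g^s)^{\sigma}$ via \cref{prop:FixNonOrient}, embed $\Mod(N_{g+1}^n)$ into $C_{\Mod^{\pm}(F_g^s)}(\sigma)$, and transfer the model property from the orientable Teichmüller space. The only difference is that where the paper cites L\"uck's fixed-point lemma \cite[Lemma 5.8]{Lu05}, you inline its proof by checking the fixed-point conditions for $\langle\sigma,\Theta(H)\rangle$ directly, which is a valid (and slightly more self-contained) presentation of the same argument.
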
 
\begin{proof}
By \cite[Lemma 5.8]{Lu05}, $\T(F_g^s)^\sigma$ is a model for $C_{\Mod^{\pm}(F_g^s)}(\sigma)$, the centralizer of $\sigma$ in $\Mod^{\pm}(F_g^s)$. Since $\Mod(N_{g+1}^n)$ can be realized as a subgroup of $C_{\Mod^{\pm}(F_g^s)}(\sigma)$, it follows that $\T(F_g^s)^\sigma$ is a model of $\underline E\Mod(N_{g+1}^n)$. By \cref{Teichmüller:orientable:nonorientable} there is a $\Mod(F_g^s)$-equivariant  homeomorphism between $\T(N_{g+1}^n)$ and $\left(\T(F_g^s)\right)^\sigma$, which concludes the proof.
\end{proof}

Now consider $1\leq \ell\leq n$ and take $m=2\ell$ and $n=2s$. Let $\T(N_{g+1}^n;\pi(\Delta))$ and $\T(F_g^s;\Delta)$ denote the decorated Teichmüller spaces of $N_{g+1}^n$ and its orientable double cover $F_g^s$, respectively, as considered in \cref{Embed:Teich:Non:Orient}. We now show that these spaces are also models of classifying spaces for proper actions.

\begin{proposition}
The decorated Teichmüller space $\T(F_g^s;\Delta)$ is a model of $\underline E\modeFD$ of dimension  ${6g+2s+m-7}$. In particular, $\T(F_g^s;\Delta)$ is a model of $\underline E\PMod^{\pm}(F_g^s).$ 
\end{proposition}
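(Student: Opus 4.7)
The plan is to leverage the product decomposition $\T(F_g^s;\Delta) \cong \T(F_g^s) \times \mathrm{K}^{m-1}$ from \cref{Teichmüller:orientable:nonorientable}, combined with the fact, already recalled at the start of this section, that $\T(F_g^s)$ is a model for $\underline{E}\Mod^{\pm}(F_g^s)$. Under this decomposition the action of $\modeFD$ is diagonal: on the $\T(F_g^s)$ factor it acts via the natural inclusion $\modeFD \hookrightarrow \Mod^{\pm}(F_g^s)$, and on $\mathrm{K}^{m-1}$ it acts by permuting the projective weights $[\lambda_1:\cdots:\lambda_m]$ according to the induced permutation of $\Delta$.

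First I would verify properness. If $g \in \modeFD$ fixes a point $(R,\lambda) \in \T(F_g^s;\Delta)$, then in particular $g$ fixes $R \in \T(F_g^s)$, and thus lies in the stabilizer of $R$ in $\Mod^{\pm}(F_g^s)$, which is finite. Consequently every point stabilizer is finite, and the fixed set $\T(F_g^s;\Delta)^H$ is empty whenever $H \leq \modeFD$ is infinite.

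For any finite subgroup $H \leq \modeFD$, the diagonal action gives
\[
\T(F_g^s;\Delta)^H \;=\; \T(F_g^s)^H \times (\mathrm{K}^{m-1})^H.
\]
The first factor is contractible, since $\T(F_g^s)$ is a model for $\underline{E}\Mod^{\pm}(F_g^s)$ and $H$ is a finite subgroup of that target via the inclusion above. For the second factor, $H$ acts on $\mathrm{K}^{m-1}$ by permuting the coordinates according to its action on $\Delta$, so an $H$-invariant projective positive weight is precisely one that is constant on $H$-orbits of $\Delta$; hence $(\mathrm{K}^{m-1})^H$ is an open simplex of dimension one less than the number of $H$-orbits, and in particular contractible. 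The product is therefore contractible. For the dimension, \cref{Theo:HomeoTeich} exhibits $\T(F_g^s;\Delta)$ as the geometric realization of the $\modeFD$-simplicial complex $\calA^0 - \calA_\infty^0$, whose dimension equals the maximal rank $6g+2s+m-7$ of an arc system in $\calA(\Delta)$; this supplies the required $\modeFD$-CW-structure of the stated dimension. The ``in particular'' statement then follows because $\PMod^{\pm}(F_g^s) \leq \modeFD$ (pure mapping classes preserve $\Delta$ pointwise), and the restriction of a model for $\underline{E}G$ to any subgroup $H \leq G$ is a model for $\underline{E}H$.

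The only nontrivial input is the contractibility of $\T(F_g^s)^H$ for finite $H$, which ultimately rests on Kerckhoff's solution to the Nielsen realization problem; this is already packaged in the cited fact that $\T(F_g^s)$ is a model for $\underline{E}\Mod^{\pm}(F_g^s)$, so no new hard analysis is needed. The remaining work is the routine verification of the product decomposition of fixed sets and the dimension count from Harer's triangulation, so I expect no serious obstacle.
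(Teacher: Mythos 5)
Your proposal is correct and follows essentially the same route as the paper: the product decomposition $\T(F_g^s;\Delta)\cong\T(F_g^s)\times \mathrm{K}^{m-1}$ with the diagonal action, contractibility of the first factor's fixed sets because $\T(F_g^s)$ is already a model for $\underline{E}\Mod^{\pm}(F_g^s)$, and equivariant contractibility of $\mathrm{K}^{m-1}$ via the simplicial permutation action. You simply spell out the fixed-point and properness checks that the paper's two-line proof leaves implicit.
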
 
\begin{proof}
    Since $\modeFD$ is a subgroup of $\Mod^{\pm}(F_g^s)$, then the Teichmüller space $\T(F_g^{s})$ is also a model of $\underline E\modeFD$.  On the other hand, recall from  \cref{Teichmüller:orientable:nonorientable} that the action of $\modeFD$ on the decorated Teichmüller space $\T(F_g^s;\Delta) \approx \T(F_g^{s}) \times {\rm K}^{m-1}$ is diagonal. The proposition then follows from noticing that $\modeFD$ acts on ${\rm K}^{m-1}$ simplicially, and hence ${\rm K}^{m-1}$ is equivariantly contractible.
\end{proof}

\begin{proposition}\label{thm:ModelDecorated}
The decorated Teichmüller space $\T(N_{g+1}^n;\pi(\Delta))$ is a model of the classifying space $\underline E\Mod(N_0,\pi(\Delta))$ of dimension  $3g+2n+\ell-4$. In particular, $\T(N_{g+1}^n;\pi(\Delta))$ is a model of $\underline E\PMod(N_{g+1}^n).$ 
\end{proposition}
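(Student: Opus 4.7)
The strategy is to combine the approaches of the two preceding propositions: the diagonal-action argument showing that $\T(F_g^s;\Delta)$ models $\underline{E}\Mod^\pm(F_0,\Delta)$, together with the L\"uck fixed-point argument used earlier for the non-orientable Teichm\"uller space. The plan is to apply \cite[Lemma 5.8]{Lu05} to the $\Mod^\pm(F_0,\Delta)$-CW-complex $\T(F_g^s;\Delta)$, which by the preceding proposition is already a model for $\underline{E}\Mod^\pm(F_0,\Delta)$. This immediately yields that the fixed-point subspace $\T(F_g^s;\Delta)^\sigma$ is a model for $\underline{E}C_{\Mod^\pm(F_0,\Delta)}(\sigma)$.

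Next, I would invoke \cref{Embed:Teich:Non:Orient}, which identifies $\Mod(N_0,\pi(\Delta))$ via the orientable double cover with $\Mod(F_0,\Delta)\cap C_{\Mod^\pm(F_0,\Delta)}(\sigma)$, so in particular as a subgroup of $C_{\Mod^\pm(F_0,\Delta)}(\sigma)$. Since any model for $\underline{E}G$ restricts to a model for $\underline{E}H$ whenever $H\leqslant G$, the space $\T(F_g^s;\Delta)^\sigma$ is simultaneously a model for $\underline{E}\Mod(N_0,\pi(\Delta))$. Transporting this across the $\Mod(N_0,\pi(\Delta))$-equivariant homeomorphism of \cref{prop:NonOrientDecorated} then gives that $\T(N_{g+1}^n;\pi(\Delta))$ itself is a model for $\underline{E}\Mod(N_0,\pi(\Delta))$.

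For the dimension I would use the product decomposition $\T(N_{g+1}^n;\pi(\Delta))\approx \T(N_{g+1}^n)\times {\rm K}^{\ell-1}$ from \cref{Teichmüller:orientable:nonorientable}, combined with $\dim\T(N_{g+1}^n)=3g+2n-3$ from the first proposition of this section and $\dim{\rm K}^{\ell-1}=\ell-1$, to obtain the claimed value $3g+2n+\ell-4$. The ``In particular'' clause follows because $\PMod(N_{g+1}^n)$ fixes each puncture of $N_{g+1}^n$ pointwise and therefore preserves $\pi(\Delta)$ pointwise, hence injects into $\Mod(N_0,\pi(\Delta))$; restriction of a classifying space for proper actions to any subgroup remains a model. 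I do not expect a serious obstacle here: the only real work is the bookkeeping that the subgroup inclusions and equivariant homeomorphisms supplied by \cref{Embed:Teich:Non:Orient} and \cref{prop:NonOrientDecorated} are compatible with the actions in play, while all substantive inputs (L\"uck's lemma, the preceding proposition, and the dimension of $\T(N_{g+1}^n)$) are already in hand.
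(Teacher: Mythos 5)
Your proposal is correct and follows essentially the same route as the paper: apply L\"uck's fixed-point lemma to the model $\T(F_g^s;\Delta)$ for $\underline{E}\Mod^{\pm}(F_0,\Delta)$, realize $\Mod(N_0,\pi(\Delta))$ as a subgroup of the centralizer $C_{\Mod^{\pm}(F_0,\Delta)}(\sigma)$, and transport across the equivariant homeomorphism of \cref{prop:NonOrientDecorated}. Your added bookkeeping for the dimension (via $\T(N_{g+1}^n)\times{\rm K}^{\ell-1}$) and for the ``in particular'' clause is consistent with what the paper leaves implicit.
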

\begin{proof}
Since $\T(F_g^s;\Delta)$ is a model for $\underline E\modeFD$, it follows from \cite[Lemma 5.8]{Lu05} that $\T(F_g^s;\Delta)^\sigma$ is a model for tha classifying space of proper actions for $C_{\modeFD}(\sigma)$, the centralizer of $[\sigma]$ in $\modeFD$. Since $\Mod(N_0,\pi(\Delta))$ can be realized as a subgroup of $C_{\modeFD}(\sigma)$, then $\T(F_g^s;\Delta)^\sigma$ is also a model for $\underline E\Mod(N_0,\pi(\Delta))$. The proposition follows since from \cref{prop:NonOrientDecorated} there is an equivariant homeomorphism between  $\T(N_{g+1}^{n};\pi(\Delta))$ and  $\T(F_g^s;\Delta)^\sigma$. 
\end{proof}

In \cite[Theorem~6.9]{I87} Ivanov computed the virtual cohomological dimension of $\modgsn$. For $n\geq 1$ and $g+n>1$, it is known to be $\vcd(\modgsn)=2g+n-2$. Therefore, the decorated Teichmuller spaces $\T(N_{g+1}^n;\pi(\Delta))$ give models for $\underline E\PMod(N_{g+1}^n)$ of dimensions greater than $\vcd(\PMod(N_{g+1}^n))$. For orientable surfaces, Harer's spine gives a model for $\underline E\PMod(F_{g}^s)$ of smaller dimension, which is actually of minimal dimension when $s=1$; see for example \cite[Introduction]{CJRLASS}. \cref{thm:ModelDecorated} combined with \cref{restriction:Retraction} and \cref{thm:main} imply the equivalent statement for non-orientable surfaces.

\begin{corollary}\label{spine:model:proper:action} Let $1\leq \ell\leq n$ and take $g+n>1$.
    The spine $\calZ=\calZ_{g+1}^{n,\ell}$ is a model for $\underline E \PMod(N_{g+1}^n)$ of dimension
    \[
    \dim(\calZ) = \begin{cases}
    \vcd(\PMod(N_{g+1}^n))+\ell  & \text{if } \ell< n, \\
    \vcd(\PMod(N_{g+1}^n))+n-1 & \text{if } \ell = n.
    \end{cases} 
\]
In particular, $\calZ_{g+1}^{1,1}$  gives a model of minimal dimension for $\underline E \Mod(N_{g+1}^1)$.
\end{corollary}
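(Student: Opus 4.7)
The plan is to assemble the preceding results mechanically: nothing fundamentally new needs to be proved, and the statement follows by combining \cref{thm:ModelDecorated}, \cref{restriction:Retraction}, and \cref{thm:main} with Ivanov's $\vcd$ formula. First I would invoke \cref{thm:ModelDecorated}, which already establishes that the decorated Teichmüller space $\T(N_{g+1}^n;\pi(\Delta))$ is a model for $\underline{E}\PMod(N_{g+1}^n)$. By \cref{restriction:Retraction}, there is a $\Mod(N_0,\pi(\Delta))$-equivariant deformation retraction of $\T(N_{g+1}^n;\pi(\Delta))$ onto $\calZ = \varphi^{-1}(\calY^\sigma)$. Via the subgroup inclusion $\PMod(N_{g+1}^n)\leq \Mod(N_0,\pi(\Delta))$ identified through the orientable double cover in \cref{Embed:Teich:Non:Orient}, this retraction is in particular $\PMod(N_{g+1}^n)$-equivariant. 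Since the defining properties of $\underline{E}G$ (contractibility of $H$-fixed-point sets for finite $H$ and emptiness for infinite $H$) are preserved under equivariant deformation retraction, $\calZ$ inherits the structure of a model for $\underline{E}\PMod(N_{g+1}^n)$.

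Next I would carry out the dimension bookkeeping using Ivanov's computation $\vcd(\PMod(N_{g+1}^n))=2g+n-2$ from \cite[Theorem~6.9]{I87} together with \cref{thm:main}. In the range $\ell<n$, \cref{thm:main} gives $\dim(\calZ) = 2g+n+\ell-2 = (2g+n-2)+\ell = \vcd(\PMod(N_{g+1}^n))+\ell$. When $\ell=n$, it gives $\dim(\calZ) = 2g+2n-3 = (2g+n-2)+(n-1) = \vcd(\PMod(N_{g+1}^n))+(n-1)$. These match the two cases in the statement exactly.

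For the minimality claim, I would specialize to $n=\ell=1$, where $\Mod(N_{g+1}^1)=\PMod(N_{g+1}^1)$ because $\mathfrak{S}_1$ is trivial. In this case the second branch of the formula yields $\dim(\calZ_{g+1}^{1,1})=\vcd(\PMod(N_{g+1}^1))$, and since for any virtually torsion-free group $G$ every model for $\underline{E}G$ must have dimension at least $\vcd(G)$, this model attains the minimum possible dimension. There is no genuine obstacle in this argument; the only minor point to check is that equivariance with respect to the full group $\Mod(N_0,\pi(\Delta))$ does indeed descend to the subgroup $\PMod(N_{g+1}^n)$, which is immediate from the inclusion, so the whole statement is ultimately a consolidation of the deeper work already completed.
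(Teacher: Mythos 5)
Your proposal is correct and follows exactly the route the paper takes: the paper likewise deduces the corollary by combining \cref{thm:ModelDecorated}, \cref{restriction:Retraction}, and \cref{thm:main} with Ivanov's formula $\vcd(\PMod(N_{g+1}^n))=2g+n-2$, and your dimension bookkeeping and the minimality argument via $\vcd(G)\leq \gdfin(G)$ match the paper's. Nothing further is needed.
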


It follows that $\underline{\gd}(\PMod(N_{g+1}^1)) = \vcd(\PMod(N_{g+1}^1))$. Combining this with an argument using the Birman exact sequence as in the proof of \cite[Corollary 3.5]{CJRLASS} we can show the existence of a model for $\underline E \PMod(N_{g+1}^n)$ of minimal dimension when $n\geq 2$; this is \cref{proper:dim:non:orien:mcgs:1}.



\bibliographystyle{alpha} 
\bibliography{mybib}

\end{document}